\documentclass[11pt,leqno]{amsart} 
\usepackage[a4paper,left=24mm,right=24mm,top=32mm,bottom=30mm]{geometry} 
\usepackage{epic,eepic,verbatim,amsmath,amssymb,enumitem} 
\usepackage{graphicx} 
\usepackage{color} 
\usepackage{hyperref} 
\usepackage{subfigure}
\usepackage{esint}

\newcommand{\eps}{\varepsilon}             
\newcommand{\pd}{\partial}                 
                 
\renewcommand{\div}{{\rm{div}}}

\newcommand{\Ha}{\mathcal{H}} 
\newcommand{\dHa}{\,d\Ha^2}

\newcommand{\D}{\nabla}               
\newcommand{\R}{\mathbb{R}}

\newcommand{\F}{\mathcal F}

\newcommand{\SL}{\Delta_{\Gamma}}          
  
\newcommand{\SG}{\nabla_{\Gamma}}

\newcommand{\normg}[1]{ \left\|#1\right\|_{\Gamma}}

\newcommand{\MW}{-\!\!\!\!\!\!\!\!\;\int}

\newcommand{\jump}[1]{\left[ #1 \right]_-^+}
\newcommand{\GammaT}{\boldsymbol{\Gamma}}

\newcommand{\spa}{\operatorname{span}}

\renewenvironment{proof}[0] {\noindent{\em Proof.}}{\hfill \qed\\[1ex] }

%
%
\theoremstyle{plain}
\numberwithin{equation}{section}
\newtheorem{lemma}{Lemma}[section]

\newtheorem{proposition}[lemma]{Proposition}

\theoremstyle{definition}
\newtheorem{remark}[lemma]{Remark}

\newtheorem{siproblem}[lemma]{Sharp Interface Problem}

\def\cprime{$'$}


\begin{document} 

\title[Surface Cahn-Hilliard 
modeling  raft formation
]
{A coupled surface-Cahn--Hilliard bulk-diffusion system
modeling lipid raft formation
in cell membranes}

\author[H. Garcke]{Harald Garcke}

\author[J. Kampmann]{Johannes Kampmann}

\author[A. R\"atz]{Andreas R\"atz}

\author[M. R\"oger]{Matthias R\"oger}

\subjclass[2010]{35K51, 35K71, 35Q92, 92C37, 35R37}

 \keywords{partial differential equations on surfaces, phase separation, Cahn--Hilliard equation, Ohta--Kawasaki energy, reaction-diffusion systems, singular limit}

\begin{abstract}
We propose and investigate a model for lipid raft formation and dynamics in biological membranes. The model describes the lipid composition of the membrane and an interaction with cholesterol. To account for cholesterol exchange between cytosol and cell membrane we couple a bulk-diffusion to an evolution equation on the membrane. The latter describes a relaxation dynamics for an energy taking lipid-phase separation and lipid-cholesterol interaction energy into account. It takes the form of an (extended) Cahn--Hilliard equation. Different laws for the exchange term represent equilibrium and non-equilibrium models. We present a thermodynamic justification, analyze the respective qualitative behavior and derive asymptotic reductions of the model. In particular we present a formal asymptotic expansion near the sharp interface limit, where the membrane is separated into two pure phases of saturated and unsaturated lipids, respectively. Finally we perform numerical simulations and investigate the long-time behavior of the model and its parameter dependence. Both the mathematical analysis and the numerical simulations show the emergence of raft-like structures in the non-equilibrium case whereas in the equilibrium case only macrodomains survive in the long-time evolution.
\end{abstract}
\date{\today}
\maketitle
%
\section{Introduction}
Phase separation processes that lead to microdomains of a well-defined length-scale below the system size arise in various physical and biological systems. A prominent example is the microphase separation in block-copolymers \cite{BaFr99} or other soft materials, characterized by a fluid-like disorder on molecular scales and high degree of order at larger scales. Here micro-scale pattern arise by a competition between thermodynamic forces that drive (macro-)phase separation and entropic forces 
that limit phase separation. Different mathematical models for microphase separation in di-block copolymers have been developed, in particular built on self-consistent mean field theory (see for example \cite{MaSc94} and the references therein) or density functional theory developed in \cite{Leib80,OhKa86,BaOo90} that leads to the so-called Ohta--Kawasaki free energy. 
Diblock-copolymer type models have also been studied on spheres and more
general curved surfaces \cite{LiZQ14,VKCF11,CBHC07,LFZQ06} and show a
variety of different stripe- or spot-like patterns. In contrast to
materials science microphase separation on biological membranes is much less understood, in particular in living cells. In this contribution we present and analyze a  model for so-called lipid rafts, that represent microdomains of specific lipid compositions.

The outer (plasma) membrane of a biological cell consists of a bilayer formed by several sorts of lipid molecules and contains various other molecules like proteins and cholesterol. Besides being the physical boundary of the cell the plasma membrane also plays an active part in the functioning of the cell. Many studies over the past decades have shown that the structure of the outer membrane is heterogeneous with several microdomains of different lipid and/or protein composition. Such domains have been clearly observed in artificial membranes such as giant unilamellar vesicles \cite{VeKe03,VeKe05}. Here a `less fluidic' (liquid-ordered) phase of saturated lipids and cholesterol separates from a `more fluidic' (liquid-disordered) phase of unsaturated lipids. The nucleation of dispersed microdomains is followed by a classical coarsening process that leads to coexisting domains with a length scale of the order of the system size. The situation is much more complex and much less understood for living cells. 
Their plasma membrane represents a heterogeneous structure with a complex and dynamic lipidic organization. The formation, maintenance and dynamics of intermediate-sized domains (10 - 200 nm on cells of $\mu m$ size \cite{LN}) called `lipid rafts' are of prime interest. These are characterized as liquid-ordered phases that consist of  saturated lipids and are enriched of cholesterol and various proteins \cite{SiIk97,BrLo98}. These rafts contribute to various biological processes including signal transduction, membrane trafficking, and protein sorting \cite{FaSH10}. It is therefore an interesting question to study the underlying process, by which these rafts are generated and maintained, and to understand the mechanism that allows for a dynamic distribution of intermediate-sized domains.

Several phenomenological mesoscale models have been proposed, see for example the review \cite{FaSH10}. One class of models argues that raft formation is a result of a thermodynamic equilibrium process. Here one contribution is a phase separation energy that would induce a reduction of interfacial size between the raft and non-raft phases. The observation of nano-scale structures is then explained by including different additional energy contributions. One proposal is that thermal fluctuations near the critical temperature for the phase separation are responsible for the intermediate-sized structures. Other explanations consider interactions between lipids and membrane proteins that could act as a kind of surfactant or could `pin' the interfaces due to their immobility \cite{YeWe07,SCGS06,WiBaVo12}. Finally, raft formation could also be stabilized by induced changes of the membrane geometry and elasticity effects. However, as argued in \cite{FaSH10}, all such models are not able to reproduce key characteristics of 
raft dynamics; non-equilibrium effects essentially contribute to raft
formation. Of particular importance are active transport processes of
raft components that allow to maintain a non-equilibrium composition of
the membrane. The competition between phase separation and recycling of
raft components is argued to be of major importance for the dynamics and structure of lipid rafts.

Foret \cite{Fore05} proposed a simple mechanism of raft formation in a
two-component fluid membrane. This model includes a constant exchange of
lipids between the membrane and a lipid reservoir as well as a typical
phase separation energy. While relaxation of the latter tends to create
large domains, the constant insertion and extraction of lipids in the
membrane ensures indeed the formation of rafts. The emerging
microdomains are static (in contrast to the lipid rafts on actual cell
membranes) and the size distribution of these rafts is rather uniform,
whereas in vivo cell membranes show a dynamic distribution of rafts of
different sizes (see the concluding remarks in \cite{Fore05} and the
discussion of Foret’s model in \cite{FaSH10}). Moreover, as already
noticed in \cite{FaSH10}, whereas Foret’s model is motivated by including
non-equilibrium effects his model can be equivalently characterized as relaxation dynamics for an effective energy given by the Ohta--Kawasaki energy of block-copolymers \cite{
OhKa86} that is well-known to generate phase-separation in intermediate-sized structures.

A similar model by G{\'o}mez, Sagu{\'e}s and Reigada \cite{GoSR08} considers a ternary mixture of saturated and unsaturated lipids together with cholesterol and studies the interplay of lipid phase separation and a continuous recycling of cholesterol. An energy that is determined by the relative concentration $\phi$ of saturated lipids and the relative cholesterol concentration $c$ is proposed that in particular includes a phase separation energy of Ginzburg--Landau type for the lipid phases and a preference for cholesterol--saturated lipid interactions over cholesterol--unsaturated lipid interactions. The dynamics include an exchange term for cholesterol that is given by an in-/out-flux proportional to the difference from a constant equilibrium concentration of cholesterol at the membrane.

Our aim here is to propose an extended model and to present both a
mathematical analysis and numerical simulations. Similar as in
\cite{GoSR08} one ingredient of our model are energetic contributions
from lipid phase separation and lipid-cholesterol interaction. In
addition, we include the dynamics of cholesterol inside the cytosol (the liquid matter inside the cell) and prescribe a detailed coupling between the processes in the cell and on the cell membrane. In particular, the outflow of cholesterol from the cytosol appears as a source-term in the membrane-cholesterol dynamic and will be characterized by a constitutive relation. We will investigate different choices of this relation and will illustrate the implications on the emergence of microdomains.

\subsection{A lipid raft model including cholesterol exchange and cytosolic diffusion}
To give a detailed description of our model let us fix an open bounded set $B\subset\R^3$ with smooth boundary $\Gamma=\partial B$ representing the cell volume and cell membrane, respectively. Let $\varphi$ denote a rescaled relative concentration of saturated lipid molecules on the membrane, with $\varphi=1$ and $\varphi=-1$ representing the pure saturated-lipid and pure unsaturated-lipid phases, respectively. Moreover let $v$ denote the relative concentration of membrane-bound cholesterol, where $v=1$ indicates maximal saturation, and let $u$ denote the relative concentration of cytosolic cholesterol. We then prescribe a phase-separation and interaction energy of the form
\begin{equation}
  \label{eq:energy}
  \F(v, \varphi) = \int_\Gamma\Big( \frac\eps2|\nabla_\Gamma\varphi|^2 +
  \eps^{-1}W(\varphi) + \frac1{2\delta}(2v - 1 -
\varphi)^2\Big)d{\mathcal H}^2 
\end{equation}
with $W$ a double-well potential that we choose as $W(\varphi)=
\frac{1}{4}(1-\varphi^2)^2$, constants $\eps,\delta>0$, and
$\nabla_\Gamma$ denoting the surface gradient. The first two terms
represent a classical Ginzburg-Landau phase separation energy, whereas
the third term models a preferential binding of cholesterol to the lipid-saturated phase. We define the chemical potentials
\begin{align}
  \mu &:= \frac{\delta \F}{\delta \varphi} 
  =  - \eps \SL \varphi + \eps^{-1}W'(\varphi) - \delta^{-1}(2v - 1 - \varphi),\label{eq:orig_mu}\\ 
\theta &:= \frac{\delta \F}{\delta v} = \frac2\delta(2v - 1 - \varphi)\label{eq:orig_theta},
\end{align}
where $\SL$ denotes the Laplace--Beltrami operator on $\Gamma$, and prescribe for the dynamics of the concentrations over a time interval of observation $(0,T)$ the following system of equations 
\begin{alignat}{2}
  \label{eq:diffU}
  \partial_t u &= D \Delta u &\qquad \text{in } B \times (0,T]&,\\
  \label{eq:flux}
  - D \nabla u \cdot \nu & = q 
  \quad&\text{on } \Gamma \times (0,T]&,\\
  \label{eq:CH1}
  \pd_t \varphi &= \SL \mu
  &\text{on } \Gamma \times (0,T]&,\\
  \label{eq:v}
  \pd_t v &= \SL \theta + q = \frac4\delta\SL v -  \frac2\delta\SL
  \varphi + q
  &\text{on } \Gamma \times (0,T],&
\end{alignat}
where $\nu$ denotes the outer unit-normal field of $B$ on $\Gamma$. The system is complemented with initial conditions $u_0,\varphi_0,v_0$ for $u$, $\varphi$ and $v$, respectively. The first equation
represents a simple diffusion equation for the cholesterol in the
bulk, the second equation characterizes the outflow of cholesterol. The third equation is a Cahn--Hilliard dynamics for the lipid concentration on the membrane, whereas the equation for the cholesterol on the membrane combines 
a mass-preserving relaxation of the interaction energy and an exchange with the bulk reservoir of cholesterol given by the flux from the cytosol. This combination yields
a diffusion equation with cross-diffusion contributions and a source term.

To close the system it remains to characterize the exchange term $q$. We follow here two possibilities: First we prescribe a constitutive relation by considering the membrane attachment as an elementary `reaction' between free sites on the membrane and cholesterol, and the detachment as proportional to the membrane cholesterol concentration, expressed by the choice
\begin{align}
	q \,=\, c_1 u (1 - v) - c_2 v. \label{eq:q-1}
\end{align}
A similar coupling of bulk--surface equations has been investigated in a
reaction-diffusion model for signaling networks \cite{RR,RaRo14}.  As a
second possibility we consider choices of $q$ that allow for a global
free energy inequality for the coupled membrane/cytosol system. These two different cases could be considered as a distinction between 
open and closed systems and one important aspect of this work is to evaluate the consequences of these choices for the formation of complex phases.

We remark  that the system conserves both the total cholesterol and the lipid concentrations since for arbitrary choices of $q$ we obtain the relations
\begin{align}
  \frac{d}{dt} \left(\int_B u \,dx+ \int_\Gamma v\, d{\mathcal H}^2 \right) = 0,\\
  \frac{d}{dt}\int_\Gamma \varphi \,d{\mathcal H}^2  = 0.
\end{align}

Let us contrast the above model with the Ohta--Kawasaki model for phase separation in diblock copolymers mentioned above. Let $\Omega\subset \R^n$ denote a spatial domain, $\varphi$ the relative concentration of one of the two polymers and let $m:=\fint\varphi$ be the prescribed average of $\varphi$ over $\Omega$. The mean field potential $z$ is then given by
\begin{align*}
	-\Delta z  \,=\, \varphi-m \quad\text{ in }\Omega,\qquad \nabla z\cdot\nu_\Omega\,=\, 0\quad\text{ on }\partial\Omega,\qquad \int_\Omega z \,=\, 0.
\end{align*}
Then a free energy is prescribed of the form
\begin{align}
	\F_{OK}(\varphi) \,=\, \int_\Omega
\Big(\frac\eps2|\nabla\varphi|^2 +
  \eps^{-1}W(\varphi)\Big) \,dx+ \frac{\sigma}{2}\int_\Omega |\nabla
z|^2\,dx , \label{eq:OK-intro}
\end{align}
where $\sigma>0$ is a fixed constant. Note that the last term can also be written as $\frac{\sigma}{2}\|\varphi-\fint\varphi\|_{H^{-1}}^2$. A relaxation dynamics of Cahn--Hilliard type is then typically considered given 
as
\begin{align}
	\partial_t\varphi \,=\, \Delta\mu,\qquad \mu\,=\, \frac{\delta \F_{OK}}{\delta \varphi} 
  \,=\,  - \eps \Delta \varphi + \eps^{-1}W'(\varphi)+ \sigma z. \label{eq:OK-dyn}
\end{align}
The energies $\F$ and $\F_{OK}$ both contain a Cahn--Hilliard type energy contribution that favors macro-phase separation but are different in the additional terms. However we will see below that stationary patterns for our lipid raft model with the choice \eqref{eq:q-1} are for small $\delta>0$ closely related to stationary points of $\F_{OK}$. If on the other hand one considers simple choices for $q$ that lead to a 
global free energy inequality, we will observe a macro-scale separation of all saturated lipids in one connected domain. This indicates that in fact non-equilibrium processes are responsible for lipid raft formation.

Several asymptotic regimes are interesting in view of the different
parameters included in our model. We will investigate the limit
$\eps\to 0$ that corresponds to a strong segregation limit and leads
to a model where no mixing of the lipid phases is allowed and the
domain splits into regions where $\varphi=1$ and $\varphi=-1$
respectively. This limit corresponds to the sharp interface limit in
phase-field models and connects to the analysis of the Ohta--Kawasaki
model in \cite{NiOh95}. Since the cytosolic diffusion in biological
cells is known to be much faster than lateral diffusion on the cell
membrane another natural reduction of the model appears in the limit
$D\to\infty$ that leads to a non-local model defined solely on the
cell membrane. Finally, assuming the effect of the lipid
interaction with cholesterol to be large motivates to consider the
asymptotic regime $\delta\downarrow 0$.

\subsection{Outline of the paper and main results}
In the next section  we will derive the model \eqref{eq:diffU}-\eqref{eq:v} from thermodynamic considerations. In particular we will show that for arbitrary choices of the exchange term $q$ the surface equations and the bulk (cytosolic) equations are thermodynamically consistent when viewed as separate systems. Depending on the specific choice of $q$ we may or may not have a global (that is with respect to the full model) free energy inequality. We will present examples for both cases. \\
In Section \ref{sec:qualitative} we will first derive a reduced raft
model in the large cytosolic diffusion limit by formally taking
$D\to\infty$. We then analyze the qualitative behavior of the
(reduced) system in terms of a characterization of stationary points
and an investigation of their relation to stationary points of the
Ohta--Kawasaki model. A formal asymptotic expansion for the sharp
interface reduction $\eps\to 0$ of our raft model is presented in
Section \ref{sec:sharpinterface}. Here we also briefly discuss  the
resulting limit problem that takes the form of a free boundary problem
of Mullins--Sekerka type on the membrane with an additional coupling
to a diffusion process in the bulk and including an interaction with
the cholesterol concentration. In Section \ref{sec:simulations} we
present numerical simulations of the full and reduced raft model. In
particular, we study spinodal decomposition, coarsening scenarios
and the possible appearance of raft-like structures as (almost) stationary
states for different choices of the exchange term $q$ and for different parameter regimes. Some conclusions are stated in the final section.

\section{Thermodynamic justification of the lipid raft model}\label{sec:thermodynamics}
In this section we will derive the governing equations for the lipid raft model
from basic thermodynamical conservation laws using a free energy inequality.
Our arguments are similar to an approach used by Gurtin
\cite{Gurtin1989, Gurtin1996}, who derived the Cahn-Hilliard equation in the context
of non-equilibrium thermodynamics. We first of all consider the equations which
have to hold on the surface, will then consider the equations in the bulk and
subsequently
we will couple both systems. 

The basic quantities on the membrane surface are the
 rescaled relative concentration of the saturated lipid molecules $\varphi$, the
concentration $v$ of the membrane-bound cholesterol, the mass flux $J_\varphi$
of the lipid molecules, the mass flux $J_v$ of the cholesterol, the mass supply of
cholesterol $q$, the surface free energy density $f$ and the chemical potential
$\mu$ related to the lipid molecules and the chemical potential $\theta$ related to
the surface cholesterol. The underlying laws for any surface subdomain
$\Sigma\subset \Gamma$ are the mass balance for the lipids
\begin{equation}
\label{MB1}
\frac d{dt} \int_\Sigma  \varphi\,  d{\mathcal H}^2  = -\int_{\partial \Sigma}  
\,J_\varphi \cdot n \, d{\mathcal H}^1,
\end{equation}
the mass balance for the surface cholesterol
\begin{equation}
\label{MB2}
\frac d{dt} \int_\Sigma  v \, d{\mathcal H}^2  = -\int_{\partial \Sigma}
\,J_v \cdot n \, d{\mathcal H}^1 + \int_\Sigma \,q\, d{\mathcal H}^2
\end{equation}
and the second law of thermodynamics, which in the isothermal situation has the form
\begin{equation}
\label{MB3} 
\frac d{dt} \int_\Sigma  f d{\mathcal H}^2  \leq  -\int_{\partial \Sigma}
\,(\mu J_\varphi \cdot n -
(\partial_t \varphi f_{,\nabla_\Gamma \varphi} \cdot n) +
\theta J_v \cdot n)  \, d{\mathcal H}^1 + \int_\Sigma \,\theta q\, d{\mathcal H}^2
\end{equation}
where $f$ denotes the surface free energy density and where $n$ is the outer unit conormal to $\partial \Sigma$ in the tangent space
of $\Gamma$. In addition, we denote by $d{\mathcal H}^d$ the integration with respect
to the $d-$dimensional surface measure and $f_{,\nabla_\Gamma \varphi}$
denotes the partial derivatives of $f$ with respect to
the variables related to $\nabla_\Gamma\varphi$ in a constitutive
relation
$f=f(..., \nabla_\Gamma \varphi,...)$. Similarly we will denote with a
subscript comma partial derivatives with respect to other variables.
 For a discussion of these
laws in cases in which source terms are present and at the same time 
 $f$ does not
depend on $\nabla_\Gamma\varphi$  we refer to
\cite{Gurtin1989}, \cite[Chapter 62]{GFA},
and \cite{PodioG2006}.  
Thermodynamical models of phase transitions with an order parameter
typically involve a free energy density $f$ which does depend on
$\nabla_\Gamma \varphi$.
In this case  the free energy flux does not only  involve
the classical terms $\mu J_\varphi$  and $\theta J_v$ but also
a term $ \partial_t  \varphi f_{,\nabla_\Gamma \varphi}$. This is discussed
in \cite{Gurtin1996,AltP1994,AGG}. Gurtin \cite{Gurtin1996}
introduces a microforce balance involving a microstress $\bf \xi$ in
order to derive the Cahn-Hilliard equation. Here, we do not discuss the microforce
balance and instead already use the form ${\bf \xi} =
 f_{,\nabla_\Gamma \varphi}$ which could be  derived in our context in the same way as
in \cite{Gurtin1996}. However, in order to shorten the
presentation we do not state the details.
We hence obtain (\ref{MB3}) as the relevant free energy
inequality in cases where no external microforces are present.

Since the above (in-)equalities (\ref{MB1})--(\ref{MB3}) hold for all
$\Sigma$, we
obtain with the help of the Gau{\ss} theorem on surfaces the local forms,
compare \cite{Gurtin1996},
\begin{alignat}{2}
\label{CL1}
\partial_t \varphi  +\div_\Gamma J_\varphi  &=0
&\qquad \text{in } \Gamma \times (0,T]&,\\
\label{CL2}
\partial_t v +\div_\Gamma J_v  &=q &\qquad \text{in } \Gamma \times
(0,T]&,
\\
\partial_t f+ \div_\Gamma( \mu J_\varphi 
-  \partial_t \varphi
f_{,\nabla_\Gamma \varphi}  + \theta J_v )&\leq \theta q
&\qquad \text{in } \Gamma \times (0,T]&.
\end{alignat}
With the constitutive relation $f=f(v, \varphi, \nabla_\Gamma \varphi)$
we obtain from the local form of the  free energy inequality
\begin{align*}
f_{,\varphi} \partial_t \varphi 
+f_{,v} \partial_t v +& \nabla_\Gamma \mu \cdot J_\varphi + \nabla_\Gamma \theta \cdot J_v +
\\
&(\div_\Gamma J_\varphi) \mu + (\div_\Gamma J_v) \theta - \partial_t\,
\varphi \div_\Gamma   f_{,\nabla_\Gamma \varphi} \leq 
\theta q.
\end{align*}
Using the  conservation laws  (\ref{CL1}) and (\ref{CL2}) we obtain
$$
(f_{,\varphi} - \div_\Gamma (f_{,\nabla_\Gamma \varphi} ) -\mu)\partial_t \varphi +
( f_{,v} -\theta)\partial_t v + \nabla_\Gamma \mu\cdot J_\varphi + \nabla_\Gamma \theta \cdot J_{v} \leq 0.
$$
The fact that solutions of the conservation laws
with arbitrary values for $\partial_t \varphi $ and $\partial_t v$ 
can appear is used in the theory of rational thermodynamics
to show that the factors multiplying 
$\partial_t \varphi $ and $\partial_t v$ have to disappear as they do
not depend on $\partial_t \varphi $ and $\partial_t v$.
We refer to Liu's method of Lagrange multipliers
\cite{Liu} and to \cite{GFA,AltP1994} for a more precise discussion on
how the free energy inequality can be used to restrict possible
constitutive relations. 
We now choose the following  constitutive relations which guarantee
that the free energy inequality is fulfilled for arbitrary solutions of
(\ref{CL1}), (\ref{CL2}). In fact, choosing
\begin{align}
\mu&=f_{,\varphi} - \div_\Gamma (f_{,\nabla_\Gamma \varphi} ), \\
\theta&= f_{,v} ,\\
J_\varphi &= - D_\varphi \nabla_\Gamma  \mu, \\
J_v&= - D_v \nabla_\Gamma \theta 
\end{align}
with $D_\varphi, D_v \geq 0$ ensures that the free energy inequality is fulfilled for all
solutions of the conservation laws.
More general models, e.g. taking cross diffusion into account, are possible and we refer to
\cite{AltP1994} for an approach which can be used to obtain more general models.

We now consider the governing physical laws in the bulk. As variables we
choose $u$ which is the relative concentration of the cytosolic cholesterol, the
bulk chemical potential $\mu_u$, the
bulk free energy density $f_b(u)$, the bulk flux $J_u$ 
and the surface mass source term $q_u$. We need to fulfill
the following mass balance equation in integral form
which has to hold for all open $U\subset B$:
\begin{equation}
\label{IMB}
\frac d{dt} \int_U \,u\,dx = -\int_{(\partial U)\setminus \Gamma}
\,J_u\cdot \nu\, 
d{\mathcal H}^2 + \int_{(\partial U)\cap \Gamma} q_u  d{\mathcal H}^2
\end{equation}
and the free energy inequality
\begin{equation}
\label{FEB}
\frac d{dt} \int_U \,f_b(u)\,dx \leq -\int_{(\partial U) \setminus \Gamma}
\,\mu_u J_u\cdot \nu\, 
d{\mathcal H}^2 + \int_{(\partial U)\cap \Gamma} \mu_u q_u  d{\mathcal H}^2
\end{equation}
which also has to hold for all open $U \subset B$.
Here we allowed for a source term $q_u$ on $\Gamma$ and in accordance to our discussion above we introduced the free energy source $\mu_u q_u$
in $(\ref{FEB})$. As on the interface the free energy source term is given classically as a product of the mass source term and the chemical potential.
If we use the fact that we can choose an arbitrary open set $U$ which is
compactly supported in $B$  (which then implies $(\partial U) \cap
\Gamma =\emptyset$)
we obtain with the help of the Gau{\ss} theorem
\begin{alignat}{2}
\label{BMB} \partial_t u + \div J_u &=0 &\qquad \text{in }B 
\times (0,T]&, \\
\label{PFEB} \partial_t f_b(u) + \div (\mu_u J_u)&\leq 0
&\qquad \text{in }B
\times (0,T]&.
\end{alignat} 
Choosing
\begin{align}
\label{eq:mu_u}
\mu_u&= f_b^\prime (u),\\
J_u&=-M(u) \nabla (f^\prime_b (u))
\end{align}
makes sure that (\ref{PFEB}) is true for all solutions of (\ref{BMB}). 
In the following we will often choose $M(u)
=\frac{D_u}{f_b^{\prime\prime}(u)}$ and this will lead to the linear diffusion equation \eqref{eq:diffU}
$$\partial_t u - D_u \Delta u=0 \qquad \text{in }B
\times (0,T].$$
Choosing  $U$ such that $\partial U\cap \Gamma\neq \emptyset$ we obtain from (\ref{IMB})
$$0=\int_U\,  (\partial_t u + \div J_u) \,dx = \int_{\partial
U\cap\Gamma}
(q_u +J_u\cdot \nu) d{\mathcal H}^2$$
which gives, since $U$ is arbitrary,
$$q_u =-J_u \cdot \nu \;(= D_u \nabla u\cdot \nu ) \qquad \text{in
}\Gamma
\times (0,T], $$
and which yields \eqref{eq:flux} for $q=-q_u$. We will now state global balance laws in the case where the mass supply for the interface stems from the bulk and vice versa.
\begin{lemma}\label{l:free_energy_ineq}
We assume that the above stated mass balance equations hold for the bulk
and the surface and assume in addition that $q_u=-q$. Then it holds
$$\frac d{dt} \Big( 
 \int_B \,u\,dx + \int_\Gamma \,v\, d{\mathcal H}^2 \Big)=0
\qquad \frac d{dt} \int_\Gamma \,\varphi \, d{\mathcal H}^2=0  .$$
If in addition, the free energy inequalities in the bulk and on the
surface are true it also holds 
$$\frac d{dt} \Big( \int_\Gamma \, f(v,\varphi,\nabla_\Gamma \varphi) \,
d{\mathcal H}^2 + \int_B\, f_b(u)\, dx \Big) \leq \int_\Gamma q(\theta
-\mu_u) d{\mathcal H}^2.$$
\end{lemma}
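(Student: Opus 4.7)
The plan is to apply the integral balance laws \eqref{MB1}--\eqref{MB3}, \eqref{IMB} and \eqref{FEB} with the test domains chosen to be the whole membrane $\Sigma=\Gamma$ and the whole bulk $U=B$. Since $\Gamma=\partial B$ is a closed surface we have $\partial\Sigma=\partial\Gamma=\emptyset$, so every conormal boundary integral drops out; moreover $(\partial B)\setminus\Gamma=\emptyset$ while $(\partial B)\cap\Gamma=\Gamma$, so the bulk boundary integrals reduce entirely to surface integrals on $\Gamma$. What remains are the exchange contributions, which can then be combined through the coupling relation $q_u=-q$.

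For the two conservation identities I would first take $\Sigma=\Gamma$ in \eqref{MB1} to obtain $\frac{d}{dt}\int_\Gamma\varphi\,d{\mathcal H}^2=0$. Taking $\Sigma=\Gamma$ in \eqref{MB2} and $U=B$ in \eqref{IMB} yields
$$\frac{d}{dt}\int_\Gamma v\,d{\mathcal H}^2 = \int_\Gamma q\,d{\mathcal H}^2, \qquad \frac{d}{dt}\int_B u\,dx = \int_\Gamma q_u\,d{\mathcal H}^2,$$
and substituting $q_u=-q$ and summing the two gives the first displayed identity.

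For the global free energy inequality I would argue analogously. Setting $\Sigma=\Gamma$ in \eqref{MB3} removes both the classical conormal flux contribution and the microstress term $\partial_t\varphi\,f_{,\nabla_\Gamma\varphi}\cdot n$, leaving $\frac{d}{dt}\int_\Gamma f\,d{\mathcal H}^2 \leq \int_\Gamma \theta q\,d{\mathcal H}^2$. Setting $U=B$ in \eqref{FEB} gives $\frac{d}{dt}\int_B f_b(u)\,dx \leq \int_\Gamma \mu_u q_u\,d{\mathcal H}^2=-\int_\Gamma\mu_u q\,d{\mathcal H}^2$. Adding these two inequalities and collecting $q$ reproduces the asserted bound with right-hand side $\int_\Gamma q(\theta-\mu_u)\,d{\mathcal H}^2$.

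There is essentially no technical obstacle here; the argument is a direct evaluation of the global balance laws on the full domains, exploiting only that $\Gamma$ is a closed surface and the coupling $q_u=-q$ derived just before the statement. The one conceptual point worth emphasizing in the write-up is that the right-hand side $\int_\Gamma q(\theta-\mu_u)\,d{\mathcal H}^2$ is not in general non-positive: whether the coupled bulk/surface system genuinely dissipates the total free energy is entirely controlled by the constitutive choice of $q$, and this is precisely the dichotomy between the equilibrium and non-equilibrium models announced in the introduction.
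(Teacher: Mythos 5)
Your proof is correct and follows exactly the paper's own argument: evaluate the integral balance laws and free energy inequalities on $\Sigma=\Gamma$ and $U=B$, use that $\partial\Gamma=\emptyset$ (and $(\partial B)\setminus\Gamma=\emptyset$) to kill the boundary terms, and combine via $q_u=-q$. The paper merely states this in two sentences; your write-up supplies the same steps in more detail.
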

\begin{proof}
The first two equations  follow from (\ref{MB1}), 
(\ref{MB2}) with $\Sigma=\Gamma$ and
(\ref{IMB}) with $U=B$ since $\partial U=\Gamma$ and 
since $\partial \Gamma
 =\emptyset$.
The total free energy inequality follows similarly from (\ref{MB3}) and (\ref{FEB}).
\end{proof}
\begin{remark}
\setlength{\leftmargini}{1.5em}\begin{enumerate}[label=(\roman*)]
\item In the above lemma we chose $q_u=-q$, that is the mass lost on the surface generates a source of mass for the bulk.

\item Several constitutive laws for $q$ make sense. It is possible to consider
\begin{equation} 
\label{TCQ}
q=-c (\theta -\mu_u), \qquad c\geq 0
\end{equation}
which leads to 
 model for which the total free energy decreases. In this case we obtain 
$$ \frac d{dt} \Big( \int_\Gamma \, f(v,\varphi,\nabla_\Gamma \varphi)
\, d{\mathcal H}^2 + \int_B\, f_b(u)\, dx \Big) \leq - \int_\Gamma \, c
(\theta -\mu_u)^2\, d{\mathcal H}^2 \leq 0.$$
\item We also consider the reaction type source term, compare (\ref{eq:q-1}),
$$q=c_1 u(1-v) + c_2 v.$$
Also in this case we obtain a consistent model which fulfills the bulk
and surface free energy inequalities with source terms as stated above. However, in this
case the total free energy as the sum of the bulk and the surface free energy might increase which can be due to the fact that we neglect energy contributions
generated by the detachment and attachment process.

\item\label{rem:thermodynamic_spec_choices} One possible choice for the surface free energy density is, compare
(\ref{eq:energy}),
$$f(v,\varphi, \nabla_\Gamma \varphi) =\frac{\gamma \eps}2 |\nabla_\Gamma \varphi |^2 +\frac \gamma \eps  W(\varphi) +\frac 1{2\delta} (2v-1-\varphi)^2,\qquad \eps,\delta,\gamma>0.$$
For $f_b(u)=\frac{1}2 u^2$, $q_u=-q$ and $D_u=D$, $\gamma=D_\varphi=D_v=1$ we obtain the system
\eqref{eq:orig_mu}--\eqref{eq:v}. \\
With the above  quadratic choice for $f_b$ and arguing as in the
derivation
of the energy inequality one can prove the identity
\begin{align}
\nonumber
\frac d{dt} \Big( \int_\Gamma  \frac {\eps}2 |\nabla_\Gamma \varphi |^2
+\frac{1}{\eps}  W(\varphi)& +\frac 1{2\delta} (2v-1-\varphi)^2\,
d{\mathcal H}^2 + \int_B\, \frac{1}{2}u^2\, dx \Big)
\\& +\int_\Gamma  ( |\nabla_\Gamma \mu|^2 + |\nabla_\Gamma
\theta|^2 )d{\mathcal H}^2
+\int_B D |\nabla u|^2  = \int_\Gamma q(\theta - u)
\label{EnergyDerivative}
\end{align}
where the last term is non-positive for the choice (\ref{TCQ}). Any other evolution that is based on a choice of $q$ such that the right-hand side of \eqref{EnergyDerivative} is always non-positive decreases the total free energy. This is in particular the case for choices of $q$ such that \eqref{eq:orig_mu}-\eqref{eq:v} can be characterized as a gradient flow.
\end{enumerate}

In the following we are mainly interested in the dependence on the parameters $\eps,\delta,D_u$ and therefore as in \ref{rem:thermodynamic_spec_choices} above we always set $D_u=D$, $\gamma=D_\varphi=D_v=1$, in which case the above choices of free energy densities and mass fluxes yield the system \eqref{eq:orig_mu}--\eqref{eq:v}.
\end{remark}
\section{Qualitative behavior}\label{sec:qualitative}
In this section we will investigate qualitative properties of the model \eqref{eq:orig_mu}-\eqref{eq:v} and of the asymptotic reduction in the large cytosolic diffusion limit that we derive below. One key question here is whether or not our lipid raft model supports the formation of mesoscale patterns. We will distinguish different choices for the exchange term $q$ and compare evolutions that reduce the total free energy with the evolution for the choice $q$ given by the reaction-type law \eqref{eq:q-1}, which we consider as a prototype of a non-equilibrium model. We remark that most of the arguments in this section are purely formal; a rigorous justification is out of the scope of the present paper. In particular, we assume the existence of smooth solutions, their convergence to stationary states as times tends to infinity, and that the long-time behavior of the full system asymptotically agrees with that of the reduced system developed below. 

We first observe that under an additional growth assumption on the exchange
term $q$ we obtain energy bounds, even in the case that the system does
not satisfy a global energy inequality.
\begin{proposition}\label{prop:energy-bound}
Assume that $q$ has at most linear growth, that is there exists $\Lambda>0$
such that
\begin{align}
        |q(\varphi,u,v)| \,\leq\, \Lambda(1+|\varphi|+|u|+|v|)\quad\text{ for all
}\varphi,u,v \in\R. \label{eq:ass-q}
\end{align}
Then for all $0<t<T$ and all $D\geq D_0>0$, $0<\eps\leq \eps_0$ any solution of \eqref{eq:orig_mu}-\eqref{eq:v} with initial data $\varphi_0,u_0,v_0$ satisfies
\begin{align}
        \F(v(\cdot,t),\varphi(\cdot,t)) + \frac{1}{2}\int_B u(\cdot,t)^2
+ \int_0^t\int_B \frac{D}{2}|\nabla u|^2 
	\,\leq\, C(\delta,\Lambda,T,D_0,\eps_0,v_0,\varphi_0,u_0). \label{eq:eb-1}
\end{align}
\end{proposition}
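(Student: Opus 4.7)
The natural starting point is the energy identity \eqref{EnergyDerivative} derived in Section~\ref{sec:thermodynamics} for the choice $f_b(u)=\tfrac12 u^2$. Setting
\begin{equation*}
E(t):=\F(v(\cdot,t),\varphi(\cdot,t))+\tfrac12\int_B u(\cdot,t)^2\,dx,
\end{equation*}
that identity reads
\begin{equation*}
\frac{d}{dt} E(t) + \int_\Gamma\bigl(|\nabla_\Gamma\mu|^2+|\nabla_\Gamma\theta|^2\bigr)\dHa + \int_B D|\nabla u|^2\,dx \,=\, \int_\Gamma q(\theta-u)\dHa,
\end{equation*}
so the whole task reduces to dominating the right-hand side by a constant multiple of $1+E(t)$ plus an absorbable fraction of the bulk dissipation.

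For the right-hand side I apply Young's inequality,
\begin{equation*}
\int_\Gamma q(\theta-u)\dHa \,\le\, \|q\|_{L^2(\Gamma)}^2+\tfrac12\|\theta\|_{L^2(\Gamma)}^2+\tfrac12\|u\|_{L^2(\Gamma)}^2,
\end{equation*}
and use the linear growth assumption \eqref{eq:ass-q} to obtain $\|q\|_{L^2(\Gamma)}^2 \le C(\Lambda,\Gamma)\bigl(1+\|\varphi\|_{L^2(\Gamma)}^2+\|v\|_{L^2(\Gamma)}^2+\|u\|_{L^2(\Gamma)}^2\bigr)$. The surface norms of $\varphi$, $v$ and $\theta$ are then controlled directly by $E(t)$: from the quartic growth of $W$ one extracts an elementary bound of the form $\varphi^4\le 8W(\varphi)+2$, which gives $\|\varphi\|_{L^2(\Gamma)}^2 \le C\bigl(1+\eps_0\,E(t)\bigr)$; the interaction term $\tfrac1{2\delta}(2v-1-\varphi)^2$ in $\F$ then yields $\|v\|_{L^2(\Gamma)}^2\le C(\delta)(1+E(t))$ and $\|\theta\|_{L^2(\Gamma)}^2 = \tfrac4{\delta^2}\|2v-1-\varphi\|_{L^2(\Gamma)}^2\le \tfrac8\delta E(t)$.

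The coupling of bulk and surface enters through $\|u\|_{L^2(\Gamma)}^2$, which I control via the standard trace interpolation inequality: for every $\eta>0$ there exists $C_\eta>0$ such that $\|u\|_{L^2(\Gamma)}^2 \le \eta\|\nabla u\|_{L^2(B)}^2+C_\eta\|u\|_{L^2(B)}^2$ for all $u\in H^1(B)$. Since $\|u\|_{L^2(B)}^2\le 2E(t)$ and $\int_B D|\nabla u|^2\,dx\ge D_0\int_B|\nabla u|^2\,dx$, choosing $\eta$ sufficiently small in terms of $D_0$ absorbs all $\|\nabla u\|_{L^2(B)}^2$ contributions into the bulk dissipation. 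Combining the estimates produces a differential inequality
\begin{equation*}
\frac{d}{dt}E(t) + \tfrac12\int_B D|\nabla u|^2\,dx \,\le\, C(\delta,\Lambda,D_0,\eps_0)\,\bigl(1+E(t)\bigr),
\end{equation*}
so that Gronwall's lemma on $[0,T]$ bounds $E(t)$ by a constant depending on $\delta,\Lambda,T,D_0,\eps_0$ and $E(0)$; integration of the same differential inequality in time then yields the claimed control on $\int_0^t\int_B \tfrac D2|\nabla u|^2\,dx$.

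The main obstacle is precisely this trace step: the flux boundary condition forces $\|u\|_{L^2(\Gamma)}$ into the estimate, and the only room to absorb it comes from the bulk dissipation $\int_B D|\nabla u|^2$. Tuning $\eta$ small relative to $D_0$ is the decisive quantitative choice; once the right-hand side is seen to be affine in $E(t)$, the rest is routine. A fully rigorous argument would additionally justify the chain rule in time used in \eqref{EnergyDerivative}, which is covered by the smoothness assumption stated at the beginning of the section.
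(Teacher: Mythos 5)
Your proof is correct and follows essentially the same route as the paper: start from the energy identity \eqref{EnergyDerivative}, estimate the exchange term via Young's inequality and the linear growth of $q$, control $\int_\Gamma\varphi^2$, $\int_\Gamma v^2$ and $\int_\Gamma\theta^2$ by the free energy (using $W$ and the interaction term), absorb $\int_\Gamma u^2$ into the bulk dissipation via the trace interpolation inequality with a constant uniform for $D\geq D_0$, and conclude with Gronwall. The only cosmetic difference is that you fix the interpolation parameter $\eta$ in terms of $D_0$ whereas the paper splits off $\tfrac{D}{2}\int_B|\nabla u|^2$ directly and notes $C(D)\leq C(D_0)$; both are equivalent.
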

\begin{proof}
From \eqref{EnergyDerivative}  we deduce that
\begin{align}
        &\frac{d}{dt}\Big(\F(v(\cdot,t),\varphi(\cdot,t)) +
\frac{1}{2}\int_B u(\cdot,t)^2\Big) \label{eq:prop3-0}\\
        =\,& -\int_B D|\nabla u|^2(\cdot,t) - \int_\Gamma \left[ |\SG
\mu|^2(\cdot,t) +|\SG\theta|^2(\cdot,t) - (\theta
-u)(\cdot,t)q(\varphi(\cdot,t),u(\cdot,t),v(\cdot,t))\right].\notag
\end{align}
For the last term we use the estimate
\begin{align}
       \Big| \int_\Gamma (\theta-u)q(\varphi,u,v)\Big| &\,\leq\, \int_\Gamma \Big(\theta^2 + u^2 + C_\Lambda (1+\varphi^2 + u^2 +
v^2)\Big) \nonumber \\ &\,=\, \int_\Gamma \theta^2 + C_\Lambda \int_\Gamma (1+\varphi^2) + (C_\Lambda+1) \int_\Gamma u^2 + C_\Lambda \int_\Gamma v^2. \label{eq:prop3-1}
\end{align}
For the second term on the right-hand side we obtain
\begin{align}
	\int_\Gamma (1+\varphi^2) \,\leq\, C\big(1+\int_\Gamma W(\varphi)\big)\,\leq\, C(\eps_0)\big(1 +\F(v(\cdot,t),\varphi(\cdot,t))\big).\label{eq:prop3-2}
\end{align}
Using  \cite[Chapter 2, (2.25)]{LaUr68} the third term on the right-hand side of \eqref{eq:prop3-1} can be estimated by bulk quantities,
\begin{align*}
        \int_\Gamma u^2 \,\leq\, \frac{D}{2}\int_B |\nabla u|^2 + C(D)\int_B u^2 
\end{align*}
for a suitable constant $C(D)$, which in particular yields for all $D\geq D_0$
\begin{align}
        \int_\Gamma u^2 \,\leq\, \frac{D}{2}\int_B |\nabla u|^2 + C(D_0)\int_B u^2. \label{eq:prop3-2a}
\end{align}
To estimate the last integral on the right-hand side of \eqref{eq:prop3-1} we first use Young's inequality and \eqref{eq:orig_theta} to obtain
\begin{align*}
        \frac{16}{\delta^2}v^2 \,=\, \theta^2
+\frac{4}{\delta}\theta(1+\varphi)
+\frac{4}{\delta^2}(1+\varphi)^2\,\leq\, 2\theta^2 + \frac{8}{\delta^2}(1+\varphi)^2,
\end{align*}
and further deduce that
\begin{align}
        v^2  \,\leq\, \frac{\delta^2}{8} \theta^2 + C(1+ W(\varphi)), \notag\\
        \int_\Gamma v^2(\cdot,t) \,\leq\, C(\delta,\eps_0)\Big(1+\F(v(\cdot,t),\varphi(\cdot,t))\big). \label{eq:prop3-3}
\end{align}
We therefore obtain from \eqref{eq:prop3-0}-\eqref{eq:prop3-3} that
\begin{align*}
        &\frac{d}{dt}\Big(\F(v(\cdot,t),\varphi(\cdot,t)) +
\frac{1}{2}\int_B u(\cdot,t)^2\Big)  + \frac{D}{2}\int_B |\nabla
u|^2(\cdot,t)+ \int_\Gamma \Big( |\SG \mu|^2(\cdot,t)
+|\SG\theta|^2(\cdot,t)\Big)\\
        \leq\, &C(\delta,\Lambda,D_0,\eps_0)\Big(\F(v(\cdot,t),\varphi(\cdot,t))
+\frac{1}{2}\int_B u(\cdot,t)^2\Big)+C(\delta,\Lambda).
\end{align*}
By the Gronwall inequality we deduce the claim.
\end{proof}
Note that for the choice \eqref{eq:q-1} of $q$ assumption \eqref{eq:ass-q} is not satisfied. However, for any modification that coincides with that choice on a bounded domain in the $u,v$ plane and that has at most linear growth outside the conclusion of Proposition \ref{prop:energy-bound} holds.  For \eqref{TCQ} or any other choice of $q$ that implies a total free energy inequality we obtain an even better estimate, since now the right-hand side in \eqref{EnergyDerivative} is non-positive.

\subsection{A reduced model in the limit of large cytosolic diffusion.}
Since in the application to cell biology the bulk (cytosolic) diffusion
is much higher than the lateral membrane diffusion a reasonable
reduction of the model can be expected in the limit $D\to\infty$. In the case that the exchange term $q$ satisfies assumption \eqref{eq:ass-q} we deduce by
Proposition \ref{prop:energy-bound} that $\int_0^T\int_B
D|\nabla u|^2$ is bounded uniformly in $D$. The same conclusion holds in the case of any free-energy decreasing evolution by \eqref{EnergyDerivative}. Therefore in the formal
limit $D\to\infty$ we conclude that $u$ is spatially constant and obtain
the (non--local) system of surface PDEs
\begin{alignat}{2}
  \label{eq:nonlocal1}
  \pd_t \varphi &= \SL \mu
  &\qquad\text{on } \Gamma \times (0,T]&,\\
  \label{eq:nonlocal2}
  \mu &= - \eps \SL \varphi + \eps^{-1}W'(\varphi) - \delta^{-1}(2v - 1
- \varphi)
  &\text{on } \Gamma \times (0,T]&,\\
  \label{eq:nonlocal3}
  \pd_t v &= \SL \theta + q = \frac4\delta\SL v - \frac2\delta\SL
  \varphi + q(\varphi,u,v)
  &\text{on } \Gamma \times (0,T].&
\end{alignat}
This system is complemented by initial conditions for $\varphi$ and $v$. The cholesterol concentration $u=u(t)$ is determined by a mass conservation condition
\begin{equation}
  \label{eq:uNonlocal}
  \int_B u(t) + \int_\Gamma v(\cdot,t) = M,
\end{equation}
where $M>0$ is the total mass of cholesterol in the system.

Note that the transformation of the coupled bulk-surface system into a system only defined in the surface has the price of introducing a non-local term by the characterization of $u$ through the mass constraint. The reduction \eqref{eq:nonlocal1}-\eqref{eq:uNonlocal} is similar to the reduction to a shadow system for $2\times 2$ reaction-diffusion systems introduced by Keener \cite{Keen78}, see also the discussion in \cite{Ni11}.

In the qualitative analysis below and the numerical simulations we will
often restrict ourselves to the special choice $q$ of the exchange
function given in \eqref{eq:q-1}. For the reduced model it is then
possible to compute the evolution of the total mass of $u$ and $v$,
which are related by \eqref{eq:uNonlocal}. We deduce, cf.~\eqref{IMB},
\begin{align}
        \label{eq:3.10-a}\frac{d}{dt} \int_B u(t)\,dx \,&=\, -\int_\Gamma q(\varphi(\cdot,t),u(t),v(\cdot,t))\dHa
\,=\, \int_\Gamma -c_1u(t)(1-v)(\cdot,t) + c_2v(\cdot,t) \dHa\\
        &=\, \int_\Gamma (c_1u(t)+c_2)v(\cdot,t)\dHa
-c_1\frac{|\Gamma|}{|B|}\int_B u(t)\,dx \notag\\
        &=\, (c_1u(t)+c_2)\left(M-\int_B u(t)\,dx\right)-c_1\frac{|\Gamma|}{|B|}\int_B
u(t)\,dx \notag\\
        &=\,  -\frac{c_1}{|B|}\left(\int_B u(t)\,dx\right)^2
+\left(c_1\frac{M-|\Gamma|}{|B|}-c_2\right)\int_B u(t)\,dx +c_2M \notag
\end{align}
and therefore we see that $u(t)$ remains nonnegative if it was initially
nonnegative (which is the relevant case) and converges for
$t\to\infty$ to $u_\infty$, which is the positive zero of
\begin{align*}
        p(z) = -c_1z^2 +\big(c_1\frac{M-|\Gamma|}{|B|}-c_2\big)z
+\frac{c_2M}{|B|},
\end{align*}
thus
\begin{align}
        u_\infty \,=\,
\frac{1}{2}\big(\frac{M-|\Gamma|}{|B|}-\frac{c_2}{c_1}\big) +
\sqrt{\frac{1}{4}\big(\frac{M-|\Gamma|}{|B|}-\frac{c_2}{c_1}\big)^2
+\frac{c_2M}{c_1|B|}}. \label{eq:u-infty}
\end{align}
Since $p(0)>0$ and $p(\frac{M}{\left| B \right|})<0$ we also obtain that $\int_\Gamma v(t)$
remains in $[0,M]$ for all times.

We remark that if $q$ is given as in \eqref{eq:q-1}, then assumption \eqref{eq:ass-q} does not hold. Nevertheless we can obtain the reduced model for this particular choice of $q$ if we start with a modified version: Replace first $q$ by \begin{align*}
	\tilde{q}(u,v) \,=\, c_1u -c_1\eta(u)v - c_2v,
\end{align*}
where $\eta:\R\to\R$ is any smooth, monotone increasing and uniformly bounded function with $\eta(r)=r$ for $|r|\leq M|B|^{-1}$. Now the above arguments apply and we obtain the non-local model with exchange term $\tilde q$. By analogous computations as above we then deduce 
\begin{align*}
        \frac{d}{dt} \int_B u(t)\,dx   &=\,  
        -\frac{c_1|\Gamma|}{|B|}\int_B u(t)\,dx + \Big(c_1\eta\big(\frac{1}{|B|}\int_B u(t)\,dx\big) +c_2\Big)\big(M-\int_B u(t)\,dx\big),
\end{align*}
which yields $0\leq \int_B u(t)\,dx\leq M$ for all $t\geq 0$ if this property holds for the initial data. But this implies $\tilde q(u(t),v(\cdot,t))=q(u(t),v(\cdot,t))$ for all $t\geq 0$. This justifies to consider in the following analysis and in the numerical simulations in Section \ref{sec:simulations} the exchange term $q$ from \eqref{eq:q-1} also for the non-local reduction.
\subsection{Stationary points}
\label{sec:statPoints}
We are in particular interested in the long-time behavior of solutions and will therefore next investigate stationary points of our lipid raft system. For the full system \eqref{eq:orig_mu}-\eqref{eq:v} stationary points $(\varphi_\infty,u_\infty,v_\infty)$ are characterized by the equations
\begin{align}
  \notag
  0 &= \SL \mu_\infty\qquad\text{ on }\Gamma,  \\
  \label{eq:3-stat}
  0 &= \SL \theta_\infty + q(\varphi_\infty,u_\infty,v_\infty)\qquad\text{ on }\Gamma,\\
  \notag
  \Delta u_\infty &=\, 0\quad\text{ in }B,\qquad -D\nabla
u_\infty\cdot\nu \,=\,q(\varphi_\infty,u_\infty,v_\infty) \quad\text{ on }\Gamma.
\end{align}
This implies that $\mu_\infty$ is constant and
\begin{align}
        \label{eq:n1-stat}
        -\eps\SL \varphi_\infty + \frac{1}{\eps}W'(\varphi_\infty)
\,&=\, \frac{1}{\delta}(2v_\infty-1-\varphi_\infty) +\mu_\infty\,=\, \frac{\theta_\infty}{2} +\mu_\infty,\\
        q(\varphi_\infty,u_\infty,v_\infty) \,&=\,
-\frac{2}{\delta}\Delta(2v_\infty-1-\varphi_\infty), \label{eq:2a-stat}\\
  \Delta u_\infty &=\, 0\quad\text{ in }B, \qquad -D\nabla
u_\infty\cdot\nu \,=\,q(\varphi_\infty,u_\infty,v_\infty) \quad\text{ on }\Gamma.
\label{eq:2b-stat}
\end{align}
Alternatively, this system can be characterized by
\begin{align}
        -\eps\SL \varphi_\infty +
\frac{1}{\eps}W'(\varphi_\infty) \,=\, \mathcal Q(\varphi_\infty)+\mu_\infty, \label{eq:OKvariant}
\end{align}
where $\mathcal Q(\varphi_\infty)=\frac{1}{2}\theta_\infty$ is a
nonlocal function of $\varphi_\infty$ as $u_\infty,v_\infty$ and hence $\theta_\infty$ are
determined by $\varphi_\infty$ through \eqref{eq:2a-stat},
\eqref{eq:2b-stat}.

\subsection{The case of an energy-decreasing evolution}
\label{subsec:energyDecreasingEvolution}
Let us in the following first consider the case that $q$ is chosen such that the right-hand side in \eqref{EnergyDerivative} is nonpositive, hence the total free energy is decreasing. For any stationary point $u_\infty,v_\infty,\varphi_\infty$ the energy inequality \eqref{EnergyDerivative} yields that $\mu_\infty,\theta_\infty$ and $u_\infty$ are constant, in particular by \eqref{eq:n1-stat}
\begin{align*}
	 -\eps\SL \varphi_\infty + \frac{1}{\eps}W'(\varphi_\infty) \,&=\,\frac{\theta_\infty}{2} +\mu_\infty\in\R,\\
	 q(\varphi_\infty,u_\infty,v_\infty)\,&=\, 0.
\end{align*}
In addition, we fix the total lipid and cholesterol masses as they are for any evolution determined by the initial conditions. We therefore prescribe, for $M,M_1$ given,
\begin{align*}
	M \,&=\, \int_B u_\infty + \int_\Gamma v_\infty  \,=\, |B|u_\infty + \int_\Gamma v_\infty,\\
	M_1 \,&=\, \int_\Gamma \varphi_\infty.
\end{align*}
In particular, the stationary state $\varphi_\infty$ coincides with a critical point of the Cahn--Hilliard energy subject to a volume constraint.
The condition $q=0$ provides an additional relation between $\varphi_\infty$, $u_\infty$ and $v_\infty$. In the case of the exchange law \eqref{TCQ} this determines $\int_\Gamma v_\infty$ and $\theta_\infty$. 

We can elaborate the connection with stationary points of the Cahn--Hilliard equation a bit more if we in addition assume that $(u_\infty,v_\infty,\varphi_\infty)$ is a local minimizer of the energy \eqref{eq:energy}. We represent the latter as
\begin{equation}
  \label{eq:energySplit}
	\F(v, \varphi) = \F_1(\varphi) + \F_2(v, \varphi),
\end{equation}
where
\begin{align*}
  \F_1(\varphi) &\,:=\, \int_\Gamma\frac\eps2|\nabla\varphi|^2 +
  \eps^{-1}W(\varphi),\quad
  \F_2(v, \varphi) \,:=\, \int_\Gamma\frac1{2\delta}(2v - 1 -
  \varphi)^2.
\end{align*}
We also assume that both $\int_\Gamma \varphi_\infty$ and $\int_\Gamma v_\infty$ are fixed by the initial data and the condition $q=0$, which holds in particular in case of the exchange law \eqref{TCQ}.\\
For any $v,\varphi$ with $\int_\Gamma v =\int_\Gamma v_\infty$ and $\int_\Gamma \varphi =\int_\Gamma \varphi_\infty$
we then have
\begin{align*}
  \int_\Gamma (2v - 1- \varphi)^2
  &= \int_{\Gamma} \left(2v - 1- \varphi - \MW(2v - 1- \varphi)\right)^2
  + \int_\Gamma \left(\MW_\Gamma(2v - 1- \varphi)\right)^2\\
  &\geq \int_\Gamma \left(\MW_\Gamma(2v_\infty - 1- \varphi_\infty)\right)^2
\end{align*}
and deduce that under the respective mass constraints the minimizer of $\F_2(v,\varphi)$ are given by those $(v,\varphi)$ for which $2v - 1- \varphi$ is constant, and that the minimum only depends on $\int_\Gamma v$ and $\int_\Gamma \varphi$. Since $\theta_\infty$ and thus $2v_\infty - 1- \varphi_\infty$ are constant, we see that $u_\infty,v_\infty,\varphi_\infty$ minimizes $\F_2$.
\\
Moreover, for any $\varphi$ satisfying the mass constraint the minimum of $\F_2$ is attained by $v=\frac{1}{2}(1+ \varphi + \fint(2v - 1- \varphi))$ and is independent of $\varphi$. Therefore $\varphi_\infty$ as above needs to be a local minimizer of the Cahn--Hilliard energy $\F_1$  subject to a given mass constraint. In the case of the Cahn--Hilliard dynamics in an open convex set in $\R^n$, is is known \cite{StZu98} that stable stationary points converge in the sharp interface limit $\eps\to 0$ to configurations with one connected phase boundary of constant curvature. In analogy, one therefore might expect that for $\eps>0$ sufficiently small the only local minimizer in our lipid raft model for choices $q$ that 
lead to an energy-decreasing evolution are given by configurations with one lipid phase concentrated in a single geodesic ball on $\Gamma$. In particular, such local minimizer do not represent mesoscale pattern like lipid rafts. 
In Section \ref{sec:5.3.4} we present a numerical simulation for energy decreasing dynamics that confirms the expected behavior.
\subsection{The case of the exchange term \eqref{eq:q-1}}
\label{subsec:exchangeTerm}
Let us next discuss the choice of $q$ as given in \eqref{eq:q-1}. The representation \eqref{eq:OKvariant} shows some similarity to the equation for stationary points of the Ohta--Kawasaki model described above. We can make this more transparent in the case of the reduced system \eqref{eq:nonlocal1}-\eqref{eq:uNonlocal}, at least if we presume that the long term behavior of the reduced systems captures the respective behavior of the full system (which means that the order of limits $D\to\infty$ and $t\to\infty$ can be interchanged). Since $u_\infty$ is constant we obtain, writing $\tilde c_1 = c_1u_\infty$ for convenience, that
\begin{align}
        0 \,&=\, \int_\Gamma q(u_\infty,v_\infty) \,=\, |\Gamma|\tilde c_1 - (\tilde
c_1+c_2)\int_\Gamma v_\infty \label{eq:3.19a}
\end{align}
and further
\begin{align*}
        q(u_\infty,v_\infty)&=\, -\frac{\tilde c_1 +c_2}{2}(v_\infty-1-\varphi_\infty) -\frac{\tilde
c_1 +c_2}{2}(1+\varphi_\infty)+\tilde c_1\\
        &=\, -\frac{\delta(\tilde c_1 +c_2)}{4}\theta_\infty - \frac{\tilde c_1
+c_2}{2}(1+\varphi_\infty)+\tilde c_1\\
        &=\, -\frac{\delta(\tilde c_1 +c_2)}{4}\left(\theta_\infty-\fint\theta_\infty\right)
-\frac{\delta(\tilde c_1 +c_2)}{4}\frac{2}{\delta}\fint (v_\infty - 1 -
\varphi_\infty) - \frac{\tilde c_1 +c_2}{2}(1+\varphi_\infty) +\tilde c_1\\
        &=\, -\frac{\delta(\tilde c_1 +c_2)}{4}\left(\theta_\infty-\fint\theta_\infty\right) -
\frac{\tilde c_1 +c_2}{2}\left(\varphi_\infty-\fint\varphi_\infty\right),
\end{align*}
where we have used \eqref{eq:3.19a} in the third line.
This implies by \eqref{eq:3-stat}
\begin{align*}
        \Big(-\SL +\frac{\delta(\tilde c_1
+c_2)}{4}\Big)\left(\theta_\infty-\fint\theta_\infty\right) \,=\, -\frac{\tilde c_1
+c_2}{2}\left(\varphi_\infty-\fint\varphi_\infty\right).
\end{align*}
Since $\mu_\infty$ is constant we deduce from equation \eqref{eq:OKvariant}
\begin{align*}
	- \eps \SL \varphi_\infty + \eps^{-1}W'(\varphi_\infty) \,&=\, \mu_\infty +\frac{1}{2}\fint\theta_\infty +\frac{1}{2}\left(\theta_\infty-\fint\theta_\infty\right)\\
		\,&=\, \mu_\infty +\frac{1}{2}\fint\theta_\infty -\frac{\tilde c_1 +c_2}{4} \Big(-\SL
+\frac{\delta(\tilde c_1 +c_2)}{4}\Big)^{-1}\left(\varphi_\infty-\fint\varphi_\infty\right).
\end{align*}
For $\delta\ll 1$ this can be approximated by
\begin{align}
	- \eps \SL \varphi_\infty + \eps^{-1}W'(\varphi_\infty) +\frac{\tilde c_1 +c_2}{4} (-\SL)^{-1}\left(\varphi_\infty-\fint\varphi_\infty\right)\,&=\, \mu_\infty +\frac{1}{2}\fint\theta_\infty. \label{eq:OK-tilde}
\end{align}
The latter equation corresponds to stationary points of the
Ohta--Kawasaki functional
\begin{align}
  \label{eq:energyOK}
  \F_{OK}(\varphi) = \int_\Gamma \Big(\frac\eps2|\nabla\varphi|^2 +
  \eps^{-1}W(\varphi)\Big) +
\frac{\sigma}{2}\|\varphi-\fint\varphi\|_{H^{-1}}^2
\end{align}
with 
\begin{equation}
  \label{eq:OKParam1}
  \sigma\,=\, \frac{\tilde c_1 +c_2}{4}, 
\end{equation}
where the constant on the right-hand side of \eqref{eq:OK-tilde} should be interpreted as a Lagrange multiplier associated to a mass constraint for $\varphi$.

The total lipid mass $\int\varphi$ is given by the initial data. We can also identify
$\tilde c_1 +c_2$ as a function of the data. First its value is characterized by $u_\infty$ and using
\eqref{eq:u-infty} we deduce
\begin{align}
  \label{eq:OKParam2}
        \tilde c_1 +c_2 \,=\, \frac{1}{2} \Big(c_2 +\frac{M}{|B|}c_1
-\frac{|\Gamma|}{|B|}c_1\Big) + \sqrt{\frac{1}{4}\Big(c_2
+\frac{M}{|B|}c_1 -\frac{|\Gamma|}{|B|}c_1\Big)^2 +
c_1c_2\frac{|\Gamma|}{|B|}}.
\end{align}
In Section \ref{subsubsec:OhtaKawasaki} we will present simulations that confirm that the long-time behavior of the reduced system is for $\delta\ll 1$ in fact very close to that of the Ohta--Kawasaki dynamics. In particular, in contrast to any choice of $q$ that induces an energy-decreasing evolution, in the case of the exchange term \eqref{eq:q-1} we in fact see the occurrence of mesoscale patterns.
\begin{remark}
Let us highlight one key difference in the long-time behavior of our model in the different cases considered above. For a free-energy decreasing evolution stationary states are characterized by the properties that $\theta_\infty$ is constant and $q_\infty=q(\varphi_\infty,u_\infty,v_\infty)$ zero. For the choice \eqref{eq:q-1} of the exchange term on the other hand and the reduced system we have stationary states with non-vanishing $q_\infty$, which correspond to an persisting exchange of cholesterol between bulk and cell membrane. This process eventually allows for the formation of complex pattern on a mesoscopic scale.
\end{remark}
\section{Sharp interface limit $\eps\to 0$ by formally matched asymptotic expansions}\label{sec:sharpinterface}
In this section we formally derive the sharp interface limit of the diffuse interface model \eqref{eq:orig_mu}--\eqref{eq:v} as $\eps \rightarrow 0.$ We assume throughout this section that the tuple $\left(u_\eps,\varphi_\eps,v_\eps,\mu_\eps,\theta_\eps\right)$ solves \eqref{eq:orig_mu}--\eqref{eq:v} and converges formally as $\eps \rightarrow 0$ to a limit $\left(u,\varphi,v,\mu,\theta\right)$. Furthermore, we suppose that the family of the zero level sets of the functions $\varphi_\eps$ converges as $\eps \rightarrow 0$ to a sharp interface. This interface, at times $t\in [0,T]$ is supposed to be given as a smooth curve $\gamma(t)\subset \Gamma$ which separates the regions $\lbrace \varphi(\cdot,t) = 1 \rbrace$ and $\lbrace \varphi(\cdot,t) = -1 \rbrace.$ 
By a formal asymptotic analysis, we conclude that the limit functions $\left(u,\varphi,v,\mu,\theta\right)$ can be characterized as the solutions of a free boundary problem on the surface $\Gamma,$ which is again coupled to a diffusion equation in the bulk $B.$ Other examples for this method and more details on formal asymptotic analysis can be found in \cite{AGG,AHM,CF,Fi88,GS06} which is by far not a comprehensive list of references. 

The obtained limit problem describes a time-dependent partition of the surface $\Gamma$ into different phase regions
\begin{align}
	\Gamma^+(t) \,:=\, \lbrace \varphi(\cdot,t) =  1\rbrace\quad\text{ and }\quad \Gamma^-(t) \,:=\, \lbrace \varphi(\cdot,t) = -1 \rbrace \label{eq:def-phases}
\end{align}
and the dynamic of the interface $\gamma(t)$ between these two regions. We denote the corresponding separation of the surface-time domain $\Gamma\times(0,T]$ by
\begin{align*}
	\GammaT^\pm \,:=\, \{(x,t)\in \Gamma\times (0,T]\,:\, x\in\Gamma^\pm(t)\}.
\end{align*}
The limit problem then takes the following form.

\begin{siproblem}\label{siproblem}
The sharp interface model obtained from the formal asymptotic analysis is given by 
 \begin{alignat}{2}
\varphi &= \pm 1 & \text{on } \GammaT^{\pm} &,\label{eq:sharp_a}\\
\partial_t u &= D \Delta u &\qquad \text{in } B \times (0,T]&,\label{eq:sharp_b}\\
  - D \nabla u \cdot \nu & = q := c_1 u (1 - v) - c_2 v & \text{on } \Gamma\times (0,T] &,\label{eq:sharp_c}\\
  \Delta_\Gamma \mu &= 0 & \text{on } \GammaT^{\pm}&,\label{eq:sharp_d}\\
  \partial_t v &= \Delta_\Gamma \theta + q & \text{on } \GammaT^{\pm}&,\label{eq:sharp_e}\\
  \theta &= \frac{2}{\delta} \left(2v-1 \mp 1\right) & \text{on } \GammaT^{\pm}&,\label{eq:sharp_f}\\
   2 \mu + \theta&= c_0 \kappa_g & \text{on } \gamma &,\label{eq:sharp_g}\\
   \jump{\mu} &= 0 & \text{on } \gamma &,\label{eq:sharp_h}\\
   \jump{\theta} &= 0 & \text{on } \gamma &,\label{eq:sharp_i}\\
   - 2 \mathcal{V} &= \jump{\nabla_\Gamma \mu}\cdot\nu_\gamma & \text{on } \gamma &,\label{eq:sharp_j}\\
   - \mathcal{V} &= \jump{\nabla_\Gamma \theta}\cdot\nu_\gamma & \text{on } \gamma&, \label{eq:sharp_k}
\end{alignat}
where $\jump{\cdot}$ is the jump across the interface $\gamma$ and $\nu_\gamma(x_0,t_0) \in T_{x_0}\Gamma$ denotes the unit normal to $\gamma(t_0)$ in $x_0\in\gamma(t_0)$, pointing inside $\Gamma^+(t_0)$. The geodesic curvature of $\gamma(t)$ in $\Gamma$ is denoted by $\kappa_g(\cdot,t)$ and $\mathcal{V}(x_0,t_0)$ denotes the normal velocity of 
$\gamma(t_0)$ in $x_0 \in \gamma(t_0)$ in direction of $\nu_\gamma(x_0,t_0)$. For its precise definition, let  
$\gamma_t: U \to \gamma(t) \subset \Gamma$, $t\in (t_0-\delta,t_0+\delta)$ be a smoothly evolving family of local parameterizations of the curves $\gamma(t)$ by arc length over an open interval  $U \subset \mathbb{R}$ and let $\gamma_{t_0}(s_0)=x_0$. Then the normal velocity in $(x_0,t_0)$ is given by
\[ 
	\mathcal{V}(x_0,t_0) =  \frac{d}{dt}\Big|_{t_0}\gamma_{t}(s_0) \cdot \nu_\gamma(x_0,t_0), 
\]
see also \cite{DDE}.
\end{siproblem}

We first deduce the existence of transition layers between the phase regions. By assuming that the functions $\left(u_\eps,\varphi_\eps,v_\eps,\mu_\eps,\theta_\eps\right)$ admit suitable expansions with respect to the parameter $\eps$ in the transitions layers and in the regions away from the interface respectively, we can then deduce that the limit functions at least formally need to fulfill \eqref{eq:sharp_a}--\eqref{eq:sharp_k}.   

\subsection{Asymptotic analysis: Existence of transition layers and outer expansion}\label{subsec:outer_exp}
We start our analysis by expanding the solutions to the coupled model in the outer regions, where $\varphi_\eps$ attains values away from zero. We assume that in these regions all functions in \eqref{eq:orig_mu}--\eqref{eq:v} have expansions of the form
\[ f_\eps = \sum_{k=0}^\infty \eps^k f_k \] where $f_\eps = \varphi_\eps, v_\eps,\ldots,$ etc.

Since we postulated the existence of different phase regions characterized by the values of the limit function $\varphi$, we should first address the existence of these phase regions. To this end, we collect all terms of order $\eps^{-1}$ in \eqref{eq:orig_mu} and obtain
\[ W'(\varphi_0) = 0 \]
and as a consequence we obtain $\varphi_0 = \pm 1$ as the only stable solutions. Since $\varphi_0$ is the dominant term in the assumed expansion as $\eps \rightarrow 0$, we deduce $\varphi_\eps \rightarrow \pm 1$ as $\eps \rightarrow 0$ and the existence of the claimed transition layers. This justifies \eqref{eq:def-phases} and \eqref{eq:sharp_a}.

The discussion of equations \eqref{eq:orig_theta} and \eqref{eq:diffU}--\eqref{eq:v} is then straightforward. Comparing the terms of order $\mathcal{O}(1)$ in their corresponding equations allows us to deduce \eqref{eq:sharp_b}--\eqref{eq:sharp_f}.

Due to the (possibly steep) transition between the regions $\Gamma^+$ and $\Gamma^-$ near the interface, the spatial derivatives occurring in the system might contribute terms which are not necessarily of order $\mathcal{O}(1)$ (with respect to $\eps$) in a neighborhood of the interface. This motivates the need for a more detailed analysis of the functions in the neighborhood of the interface which we will address in Section \ref{subsec:inner_exp}.

\subsection{Coordinates for a neighborhood of the interface}
As stated above, we suppose that the zero level sets of $\varphi_\eps$ converge to some (smooth) curve $\gamma(t)$ with inner (wrt.~$\Gamma^+(t)$) unit normal field $\nu_\gamma(\cdot,t)$. 
We then introduce on a small tubular neighborhood $N$ of $\gamma(t)$ a new coordinate system which is more suitable for the analysis in the transition layer. We remark that the construction of these coordinates presented here is more complicated which is due to the fact that $N$ is a neighborhood of $\gamma(t)$ in the manifold $\Gamma.$ For a similar example, we refer the reader to \cite{ES}.

As above, let $\gamma_t: U \to \gamma(t)$ be a local parametrization of the curve $\gamma(t)$ by arc length over an open interval  $U \subset \mathbb{R}.$ It is then possible to extend $\gamma_t$ to a local parametrization $\Psi_t$ of $N$ by means of the exponential map from differential geometry. 

While details on this map can be found in the literature \cite{DC76,DC92}, it is sufficient for our purpose to quickly recall its definition. For a given point $p \in  \Gamma$ and a vector $\vec a \in T_p\Gamma,$ there exists a unique geodesic curve $c_{\vec a}$ such that $c_{\vec a}(0)=p$ and $c_{\vec a}'(0)=\vec a.$ The exponential map $\exp_p$ in $p$ is then defined for all $\vec a \in T_p\Gamma$ for which $c_{\vec a}(1)$ exists and is given by $\exp_p(\vec a)=c_{\vec a}(1)$. Note that for $z \in [0,1]$ one can easily check $\exp_p(z \vec a)=c_{\vec a}(z).$  

The distance between a point $x \in \Gamma$ and the interface $\gamma(t)$ is defined as
\[ d(x,t) := \inf \lbrace l(c) \left| c:I\rightarrow \Gamma, c \text{ connects } x \text{ and } \gamma(t) \right. \rbrace \]
where $l(c)$ denotes the length of the curve $c.$
Setting
\[ \Psi_t(s,r)=\exp_{\gamma_t(s)}(r\nu_\gamma(\gamma_t(s),t)), \]
we obtain that $\Psi_t$ is a parametrization of a neighborhood $N(t)$ of $\gamma(t).$ If we choose $N(t)$ small enough, the properties of the exponential map imply that  $r$ is the signed distance between the point $\Psi_t(s,r)$ and $\gamma(t),$ that is for $(x,t) = \Psi_t(s,r) \in N(t)$ we have
\[ r = \widehat{d}(x,t) :=\begin{cases}
	  d(\Psi_t(s,r),t) \qquad \text{ if } r \in \Gamma^+(t) \\
	  -d(\Psi_t(s,r),t) \qquad \text{ if } r \in \Gamma^-(t).
       \end{cases}
\]
For the asymptotic analysis, it is necessary to adapt the parametrization to the length scale of the transition layers. We therefore use the rescaled parametrization
\begin{equation}\label{eq:DefParam}
 \Lambda(s,z,t) = \Psi_t(s,\eps z)
\end{equation}
 of $N(t),$ where $z = \frac{r}{\eps}.$ In particular, $z$ can be written as a function of $x$ and $t$. 
 
 Let us remark that as $\gamma(t)$ is the zero level set of the signed distance function  $\widehat{d},$ the tangent space of $\gamma(t)$ is the subspace of the tangent space of $\Gamma$ which is orthogonal to the surface gradient $\SG \widehat{d}$ and thus the normal vector $\nu_\gamma$ of $\gamma(t)$ is given by $\frac{\SG \widehat{d}}{\normg{\SG \widehat{d}}}.$ Because of 
 \[ 0 = \frac{d}{dt} \widehat{d}(\gamma_t(s),t) = \SG \widehat{d}(\gamma_t(s),t) \cdot \partial_t \gamma_t(s) + \partial_t \widehat{d}(\gamma_t(s),t) \] we can thus compute \[\partial_t \gamma_t(s)\cdot \nu = -\frac{\partial_t \widehat{d}(\gamma_t(s),t)}{\normg{\SG \widehat{d}(\gamma_t(s),t)}} \] and therefore the time derivative $\partial_t z$ on $\gamma(t)$ fulfills 
 \[ \partial_t z = - \eps^{-1} \mathcal{V}. \]

\begin{remark}[Gradient, Divergence and Laplace Operator in the new coordinates]\label{rem:newCoord}
From the definition of $\Lambda$ in \eqref{eq:DefParam} we see 
\begin{align}
\Lambda(s,0,t)&=\gamma_t(s), \\ 
\partial_s \Lambda(s,0,t) &= \partial_s \gamma_t(s) = \gamma_t'(s) \text{ and }\label{eq:ParamTang} \\ 
\partial_z \Lambda(s,0,t) &= \eps \nu_\gamma(\gamma_t(s))\label{eq:ParamNorm}.
\end{align}
Furthermore, the curve $r \mapsto \Psi_t(s,r)$ is geodesic by definition and hence we have \[ \mathcal{P} \partial_{zz} \Lambda(s,z,t) = 0 \] where $\mathcal{P}$ is the projection on the tangent space $T_{\Lambda(s,z,t)}\Gamma$ on $\Gamma$ in $\Lambda(s,z,t)$.

These observations allow us to calculate
\begin{align*}
\partial_z \left(\partial_s \Lambda(s,z,t) \cdot \partial_z \Lambda(s,z,t)\right) &= \partial_{zs} \Lambda(s,z,t) \cdot \partial_z \Lambda(s,z,t) + \partial_s \Lambda(s,z,t) \cdot \partial_{zz} \Lambda(s,z,t) \\ 
&= \frac{1}{2} \partial_s \left|\partial_z \Lambda(s,z,t)\right|^2 = 0 
\end{align*}
since $\left|\partial_z \Lambda(s,z,t)\right|^2 = \eps^2$ by definition. The equations \eqref{eq:ParamTang} and \eqref{eq:ParamNorm} imply $\partial_s \Lambda(s,0,t) \cdot \partial_z \Lambda(s,0,t) = 0,$ which yields  
\begin{equation}
 \partial_s \Lambda(s,z,t) \cdot \partial_z \Lambda(s,z,t) = 0.
\end{equation}

For simplification of the following calculations, we denote the variables $s$ and $z$ by $s_1$ and $s_2$ respectively. Given the arguments above, the metric tensor with respect to $\Lambda$ is given by 
\begin{align*}
    g_{11}=g_{ss} &= \partial_s \Lambda \cdot \partial_s \Lambda, \\
    g_{12}=g_{21}=g_{sz} &= g_{zs} = \partial_s \Lambda \cdot \partial_z \Lambda = 0 \text{ and }\\
    g_{22}=g_{zz} &= \partial_z \Lambda \cdot \partial_z \Lambda = \eps^2.  
 \end{align*}
 The matrix \[ G:=\begin{pmatrix}
g_{11} & g_{12} \\
g_{21} & g_{22}
\end{pmatrix} \] is thus diagonal and as usual we denote the entries of its inverse $G^{-1}$ by $g^{ij}$. 

For a scalar function $h(x,t)=\widehat{h}(s(t,x),z(t,x),t)$ on $N$, the surface gradient on $N \subset \Gamma$ is hence expressed by
\begin{align}\label{eq:grad_newCoord}
 \SG h = \sum_{i,j=1}^2 g^{ij}\partial_{s_i}\widehat{h}\partial_{s_j}\Lambda = g^{11}\partial_{s_1}\widehat{h}\partial_{s_1}\Lambda+\frac{1}{\eps^2}\partial_{s_2}\widehat{h}\partial_{s_2}\Lambda = \nabla_{\gamma_\eps}\widehat{h}+\frac{1}{\eps}\partial_z\widehat{h}\partial_z\Lambda 
\end{align}
where $\nabla_{\gamma_\eps}$ is the surface gradient on $\gamma_{\eps}=\lbrace \Lambda(s,z,t)|s\in U\rbrace$ for a fixed $z \in [0,1].$ Similarly, 
\begin{align}\label{eq:div_newCoord}
 \SG \cdot a = \nabla_{\gamma_\eps} \cdot \widehat{a} + \frac{1}{\eps}\partial_z \widehat{a} \cdot \partial_z\Lambda
\end{align}
for some vector valued function $a(x,t)=\widehat{a}(s(x,t),z(x,t),t).$ 

In analogy to the appendix in \cite{AGG}, we calculate the Laplace-Beltrami operator $\SL$ in the new coordinates. Due to the properties of the parametrization $\Lambda(s,z,t)$ which already lead to the diagonal structure of the metric tensor above, we find $\nabla_{\gamma_\eps} \widehat{h} \cdot \partial_z \Lambda = 0$ and $\nabla_{\gamma_\eps} \widehat{h} \cdot \partial_{zz} \Lambda = 0.$ Hence
\[ \left( \partial_z \nabla_{\gamma_\eps} \widehat{h} \right) \cdot \partial_z \Lambda = \left( \partial_z \nabla_{\gamma_\eps} \widehat{h} \right) \cdot \partial_z \Lambda + \nabla_{\gamma_\eps} \widehat{h} \cdot \partial_{zz} \Lambda = \partial_z \left( \nabla_{\gamma_\eps} \widehat{h} \cdot \partial_z \Lambda \right) = 0\] and substituting \eqref{eq:grad_newCoord} in \eqref{eq:div_newCoord} we thus compute
\begin{align}\label{eq:laplace_step1}
 \SL h =&  \Delta_{\gamma_\eps} \widehat{h} + \frac{1}{\eps}\left( \nabla_{\gamma_\eps} \partial_z \widehat{h} \right)\cdot \partial_z \Lambda + \frac{1}{\eps} \partial_z \widehat{h} \nabla_{\gamma_\eps} \cdot \partial_z \Lambda \nonumber \\ &+ \frac{1}{\eps}\left( \nabla_{\partial_z \gamma_\eps} \widehat{h} \right)\cdot\partial_z\Lambda  + \frac{1}{\eps^2}\partial_{zz} \widehat{h}\cdot\partial_z \Lambda +\frac{1}{\eps^2} \partial_z \widehat{h} \partial_{zz} \Lambda \cdot \partial_z \Lambda \nonumber \\ 
 =& \Delta_{\gamma_\eps} \widehat{h} + \frac{1}{\eps} \partial_z \widehat{h} \nabla_{\gamma_\eps} \cdot \partial_z \Lambda + \frac{1}{\eps^2}\partial_{zz} \widehat{h}\cdot\partial_z \Lambda
\end{align}
where we have used the identities above.
Since \[ \partial_{ss} \Lambda \cdot \partial_s \Lambda = \frac{1}{2} \partial_s \left(\partial_s \Lambda \cdot \partial_s \Lambda \right) = 0, \]
the curvature vector $\partial_{ss} \Lambda$ of $\gamma(t)$ (seen as a curve in $\mathbb{R}^3$) is an element in $\spa \left(\partial_z \Lambda,\nu_\Gamma\right)$ where $\nu_\Gamma$ denotes the direction normal to the surface $\Gamma.$ The geodesic curvature $\kappa_g$ of $\gamma(t)$ in $\Gamma$ is therefore given by
\[ \kappa_g = \partial_{ss} \Lambda \cdot \partial_z \Lambda = \partial_s \left( \partial_s \Lambda \cdot \partial_z \Lambda\right) - \partial_s \Lambda \cdot \partial_{sz} \Lambda = - \nabla_{\gamma_\eps} \cdot \partial_z \Lambda. \]

As in \cite{AGG}, one can derive
\begin{align*}
 \nabla_{\gamma_\eps} \widehat{h} = \nabla_{\gamma} \widehat{h} + \mathcal{R}_\eps \\
 \nabla_{\gamma_\eps} \cdot \widehat{h} = \nabla_{\gamma} \cdot \widehat{h} + \mathcal{R}_\eps \\
 \Delta_{\gamma_\eps} \widehat{h} = \Delta_{\gamma} \widehat{h} + \mathcal{R}_\eps
\end{align*} where $\mathcal{R}_\eps$ is of higher order in $\eps.$
Thus \eqref{eq:laplace_step1} reads
\begin{align}
 \SL h &= \Delta_{\gamma_\eps} \widehat{h} - \frac{1}{\eps} \kappa_g \partial_z \widehat{h} + \frac{1}{\eps^2}\partial_{zz} \widehat{h}\cdot\partial_z \Lambda \nonumber \\
 &= \Delta_{\gamma} \widehat{h} - \frac{1}{\eps} \kappa_g \partial_z \widehat{h} + \frac{1}{\eps^2}\partial_{zz} \widehat{h}\cdot\partial_z \Lambda + \mathcal{R}_\eps.
\end{align}

\end{remark}

\subsection{Asymptotic analysis: Inner expansion}\label{subsec:inner_exp}

We assume now that the functions in \eqref{eq:orig_mu} - \eqref{eq:v} have inner expansions on $N$ with respect to the new variables of the form
\[ f_\eps(x,t) = F(z,s,t;\eps) = \sum_{k=0}^\infty \eps^k F_k(z,s,t) \]
where again $f_\eps = \varphi_\eps, v_\eps,\ldots$ etc. Accordingly, the inner expansions for $(v,\phi,\mu,\theta)$ will be denoted by $(V,\Phi,M,\Theta)$.

In order for the inner and outer expansions to be consistent which each other, we prescribe the following matching conditions as $z \rightarrow \pm \infty$ (again we use $F$ as an placeholder for $(V,\Phi,M,\Theta)$)
\begin{align}
F_0(t,s,\pm \infty) &\sim f^\pm_0(x,t), \label{cond:match_a} \\
\partial_z F_0(t,s,\pm \infty) &\sim 0, \label{cond:match_b} \\
\partial_z F_1(t,s,\pm \infty) &\sim \nabla_\Gamma f^\pm_0(x,t) \cdot \nu_\gamma \label{cond:match_c}
\end{align}
where $(x,t)= \Lambda(0,s,t)$ and $f^\pm_0(x,t) = \lim_{\delta \rightarrow 0} f_0(\exp_x(\pm \delta \nu_\gamma),t).$ 
We refer the reader to \cite{CF, GS06} and \cite{Fi88} for a derivation of these matching conditions.

We plug these asymptotic expansions in the equations \eqref{eq:orig_mu}-\eqref{eq:v} and use the results from Remark \ref{rem:newCoord} where necessary. Again we collect all terms with the same order in $\eps$. As we assume that the limes $\eps \rightarrow 0$ exists, we require that the terms associated with the leading orders in $\eps$ cancel out. The terms of order $\eps^{-1}$ in equation \eqref{eq:orig_mu} hence yield the differential equation
\[ - \partial_{zz}\Phi_0 + W'(\Phi_0)=0\]
for $\Phi_0.$ Together with the matching conditions and the
condition $\Phi_0(0) = 0$ we deduce 
\begin{equation}\label{eq:Phi_0}
\Phi_0(z) = \tanh(z) \quad \text{ and } \quad \frac{1}{2} | \partial_z \Phi_0 |^2(z) = W(\Phi_0)(z). 
\end{equation}

Looking at the conditions imposed by terms of order $\eps^{-2}$, equation \eqref{eq:CH1} yields $\partial_{zz} M_0 = 0$ and integrating this equation from $-\infty$ to $z$ implies that $M_0$ is independent from $z$ if we take the matching condition \eqref{cond:match_b} into account. Thus
\[ M_0(z=+\infty) = M_0(z=-\infty), \] which in turn gives \eqref{eq:sharp_h}.

In a similar way, we obtain equation \eqref{eq:sharp_i}. The terms of order $\eps^{-2}$ in \eqref{eq:v} indeed imply $\partial_{zz} \Theta_0 = 0$ and integrating in $z$ yields
\begin{equation}\label{eq:der_Theta}
\partial_z \Theta_0 = 0
\end{equation}
by the matching conditions. Again we deduce $\Theta_0(z=+\infty) = \Theta_0(z=-\infty)$ and \eqref{cond:match_a} yields \eqref{eq:sharp_i}.

Let us observe for later use that $\jump{\theta} = 0$ directly implies 
\begin{equation}\label{obs:jump_v}
\jump{v} = 1
\end{equation}
as we already saw that $\jump{\varphi} = 2$.

A second observation is motivated by the study of all terms of order $\mathcal{O}(1)$ in \eqref{eq:orig_theta}. We can deduce \[ \Theta_0 = \frac{2}{\delta}\left(2 V_0 - 1 - \Phi_0\right)  \] and since $\partial_z \Theta_0 = 0$ by \eqref{eq:der_Theta} we therefore obtain
\begin{equation}\label{cond:der_V_0}
2 \partial_z V_0(z,s,t) = \partial_z \Phi_0(z).
\end{equation}

Substituting the inner expansions in \eqref{eq:CH1} also yields terms of order $\eps^{-1}.$ The resulting equation reads \[ -\mathcal{V} \partial_z \Phi_0 = \partial_{zz} M_1. \] Equation \eqref{eq:sharp_j} is then obtained from the matching conditions by integrating in $z$.

Next, we study the terms of order $\mathcal{O}(1)$ in \eqref{eq:orig_mu}. Similar to the studies on the related Cahn-Hilliard equation we obtain 
\begin{align*}
M_0 &= -\partial_{zz} \Phi_1 + \kappa_g \partial_z \Phi_0 + W''(\Phi_0)\Phi_1 - \frac{1}{\delta}\left(2V_0-1-\Phi_0\right)\\
&=-\partial_{zz} \Phi_1 + \kappa_g \partial_z \Phi_0 + W''(\Phi_0)\Phi_1 - \frac{1}{2}\Theta_0
\end{align*}
and apply the following solvability condition for $\Phi_1$ derived in \cite[Lemma 2.2]{AHM}. 

\begin{lemma}[\cite{AHM}]
\label{l:solvCond}
Let $A(z)$ be a bounded function on $-\infty < z < \infty.$ Then the problem
\begin{align*}
  \partial_{zz} \phi + W''(F_0(z))\phi &= A(z) & z \in \mathbb{R}, \\
  \phi(0) &= 0 & \phi \in L^\infty(\mathbb{R}),
\end{align*}
has a solution if and only if
\begin{equation*}
	\int_\mathbb{R} A(z)F'_0(z) \ dz = 0.
\end{equation*}
\end{lemma}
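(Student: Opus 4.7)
The plan is to apply the Fredholm alternative for a self-adjoint second-order linear ODE on the line. The structural input is that $F_0'$ spans the kernel of the linearized operator: differentiating the profile equation $\partial_{zz}F_0 = W'(F_0)$ in $z$ gives $\partial_{zz}(F_0') = W''(F_0)\,F_0'$, so $F_0'$ solves the homogeneous version of the equation in the lemma (up to the sign convention in the statement). Moreover, since $W''(\pm 1) > 0$, the coefficient $W''(F_0(z))$ tends to a strictly positive limit as $|z|\to\infty$, so the homogeneous ODE admits an exponentially decaying and an exponentially growing mode at each end; the explicit profile $F_0'(z)=\operatorname{sech}^2 z$ decays at both ends, while any linearly independent partner grows at both ends.

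For necessity, I would multiply the equation by $F_0'$ and integrate over $(-R,R)$. Two integrations by parts move the derivatives onto $F_0'$; the boundary contributions at $\pm R$ involve $\phi$ and $\partial_z\phi$ paired against $F_0'(\pm R)$ and $F_0''(\pm R)$, both of which decay exponentially. The bound on $\phi$ together with the ODE $\partial_{zz}\phi = W''(F_0)\phi - A$ forces $\partial_z\phi$ to remain bounded, so the boundary terms vanish as $R\to\infty$. Self-adjointness and $LF_0' = 0$ then yield $\int_\mathbb{R} A\,F_0'\,dz = 0$.

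For sufficiency, I would construct a bounded solution by variation of parameters. Reduction of order produces a second linearly independent homogeneous solution
\[ \phi_2(z) \,=\, F_0'(z)\int_0^z (F_0'(s))^{-2}\,ds, \]
normalized so that the Wronskian $F_0'\,\partial_z\phi_2 - (\partial_z F_0')\,\phi_2 \equiv 1$; the function $\phi_2$ grows like $e^{c|z|}$ at both $\pm\infty$. A particular solution is
\[ \phi_p(z) \,=\, \phi_2(z)\int_0^z F_0'(s)A(s)\,ds \,-\, F_0'(z)\int_0^z \phi_2(s)A(s)\,ds. \]
The dangerous term is $\phi_2(z)\int_0^z F_0' A$, which would blow up. Writing $\int_0^z F_0' A = \int_0^\infty F_0' A - \int_z^\infty F_0' A$ and absorbing the (now vanishing, by hypothesis) constant $\int_0^\infty F_0' A$ into the kernel direction, one replaces $\phi_p$ by
\[ \tilde\phi_p(z) \,=\, -F_0'(z)\int_0^z \phi_2 A\,ds \,-\, \phi_2(z)\int_z^{\infty} F_0' A\,ds. \]
The product $\phi_2(z)\int_z^\infty F_0' A\,ds$ is $O(1)$ as $z\to+\infty$ because exponential growth of $\phi_2$ is balanced by exponential decay of the tail integral; an entirely parallel rewriting of the first term, again using the orthogonality, delivers boundedness as $z\to-\infty$. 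Finally, adding a multiple of $F_0'$ enforces $\phi(0)=0$.

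The main obstacle is precisely this boundedness bookkeeping. The homogeneous ODE admits a two-dimensional solution space with growing modes at each end, so a priori two scalar conditions are required to secure an $L^\infty$ solution. These two conditions collapse into the single scalar hypothesis $\int_\mathbb{R} A F_0'\,dz = 0$ because $F_0'$ is, up to scaling, the unique homogeneous solution decaying at both ends; the rewriting of the variation-of-parameters formula via the orthogonality is what makes this reduction explicit, and it is the only nonroutine step in the argument.
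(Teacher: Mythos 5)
Your argument is correct and follows essentially the same route the paper takes (which only sketches it and defers to \cite{AHM}): necessity by multiplying by $F_0'$ and integrating by parts with exponentially decaying boundary terms, sufficiency by variation of constants with the orthogonality condition used to cancel the growing mode at both ends. One parenthetical slip: the one-sided integral $\int_0^\infty F_0'A\,dz$ does not vanish by hypothesis --- only the full integral does --- but your displayed formula for $\tilde\phi_p$ and your closing paragraph correctly implement the actual mechanism, namely that the two one-sided boundedness conditions are made simultaneously satisfiable by a single choice of the coefficient of $\phi_2$ precisely when $\int_{\mathbb{R}}F_0'A\,dz=0$.
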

It is indeed easy to see that this condition is necessary if one multiplies the equation by $F_0'$ and uses integration by parts. The assertion that the condition is also sufficient can be derived from the method of variation of constants, details are given in \cite{AHM}.

Lemma \ref{l:solvCond} directly yields
\[
2 M_0 = c_0 \kappa_g - \frac{1}{2}\int_{-\infty}^\infty \Theta_0 \partial_z \Phi_0 \ dz
\]
where $ c_0 $ is given by \[ c_0 := \int_{-\infty}^\infty |\partial_z \Phi_0|^2 \ dz. \]

Given the fact that $\Theta_0$ is independent of $z$ (see \eqref{eq:der_Theta}) we deduce
\[ \int_{-\infty}^\infty \Theta_0 \partial_z \Phi_0 \ dz = \Theta_0 \int_{-\infty}^\infty \partial_z \Phi_0 \ dz\ = 2 \Theta_0. \]

We thus conclude
\[ 2 \mu + \theta = c_0 \kappa_g\]
as we claimed in \eqref{eq:sharp_g}.

In order to conclude our analysis, we collect all terms of order $\eps^{-1}$ from equation \eqref{eq:v}. We thus have
\[ - \mathcal{V} \partial_z V_0 = \partial_{zz} \Theta_1 \]
and another integration with respect to $z$ allows us to deduce
\[-\mathcal{V} \jump{v} = \jump{\nabla_\Gamma \theta} \cdot \nu_\gamma\]
from the matching conditions. Our observation \eqref{obs:jump_v} on $\jump{v}$ above hence implies \eqref{eq:sharp_k}.


\subsection{Free energy inequalities and conservation properties}
The discussion of the thermodynamical background in Section \ref{sec:thermodynamics} can be extended to the sharp interface model \eqref{eq:sharp_a}--\eqref{eq:sharp_k} derived above if one considers the surface free energy
\begin{align}\label{eq:sharp_surface_energy}
\mathcal{F}(\varphi, v, \gamma) &:= c_0 \int_\gamma 1 \ d\mathcal{H}^1 + \frac{1}{2\delta} \int_\Gamma (2v-1-\varphi)^2 \ d\mathcal{H}^2 \\
&= c_0 \int_\gamma 1 \ d\mathcal{H}^1 + \frac{1}{2\delta} \left[ \int_{\Gamma^-} (2v)^2 \ d\mathcal{H}^2 + \int_{\Gamma^+} (2v-2)^2 \ d\mathcal{H}^2 \right] \nonumber
\end{align}
and the bulk free energy
\begin{align}\label{eq:sharp_bulk_energy}
\mathcal{F}_b(u) := \frac{1}{2} \int_B u^2 \ dx.
\end{align}

In particular, choosing these energies the energy inequality derived in Lemma \ref{l:free_energy_ineq} for the diffuse interface model from the second law of thermodynamics has a counterpart in the sharp interface model, i.e.
\[\frac{d}{dt} \left[ \mathcal{F}(\varphi, v, \gamma) + \mathcal{F}_b(u) \right] \leq \int_\Gamma q(\theta-u) \ d\mathcal{H}^2.\]
As the following calculations show, this is mostly due to the relations specified in the equations \eqref{eq:sharp_g}--\eqref{eq:sharp_k} on $\gamma.$ They come into play since
\begin{equation}\label{eq:derivative_of_length}
\frac{d}{dt} \int_\gamma 1 \ d\mathcal{H}^1 = \int_\gamma \kappa_g \mathcal{V} \ d\mathcal{H}^1. 
\end{equation}
We refer the reader to Section 5.4 in \cite{DC76} for a derivation of this identity.

Let us now calculate the time derivative of \eqref{eq:sharp_surface_energy} in order to derive the desired energy inequality. Reynolds' transport theorem and \eqref{eq:derivative_of_length} imply
\begin{align}\label{eq:sharp_energy_ineq_step1}
\frac{d}{dt}\mathcal{F}(\varphi, v, \gamma) &= c_0 \int_\gamma \kappa_g \mathcal{V} \ d\mathcal{H}^1 + \frac{1}{2\delta} \frac{d}{dt}\int_\Gamma \left(\frac{\delta}{2} \theta\right)^2 \ d\mathcal{H}^2 \nonumber \\
 &= c_0 \int_\gamma \kappa_g \mathcal{V} \ d\mathcal{H}^1 + \frac{\delta}{8} \frac{d}{dt} \left(\int_{\Gamma^+} \theta^2 \ d\mathcal{H}^2+ \int_{\Gamma^-} \theta^2 \ d\mathcal{H}^2\right) \\
 &= c_0 \int_\gamma \kappa_g \mathcal{V} \ d\mathcal{H}^1 + \frac{\delta}{4} \int_{\Gamma^+} \theta \partial_t \theta \ d\mathcal{H}^2 + \frac{\delta}{4} \int_{\Gamma^-} \theta \partial_t \theta \ d\mathcal{H}^2. \nonumber
\end{align}
We have used equation \eqref{eq:sharp_i} to derive the last equality. Because of \eqref{eq:sharp_g}, the first integral $c_0 \int_\gamma \kappa_g \mathcal{V} \ d\mathcal{H}^1$ coincides with $\int_\gamma \left( 2 \mu + \theta \right) \mathcal{V} \ d\mathcal{H}^1.$ Using \eqref{eq:sharp_j} and \eqref{eq:sharp_k} yields
\begin{align}\label{eq:int_curvVelo1}
 c_0 \int_\gamma \kappa_g \mathcal{V} \ d\mathcal{H}^1 =& - \int_\gamma \mu \jump{\SG \mu} \cdot \nu_\gamma \ d\mathcal{H}^1 - \int_\gamma \theta \jump{\SG \theta} \cdot \nu_\gamma \ d\mathcal{H}^1 \nonumber \\
 =& - \int_\gamma \mu\SG\mu^+\cdot\nu_\gamma \ d\mathcal{H}^1 + \int_\gamma \mu\SG\mu^-\cdot\nu_\gamma \ d\mathcal{H}^1. \\&- \int_\gamma \theta\SG\theta^+\cdot\nu_\gamma \ d\mathcal{H}^1 + \int_\gamma \theta\SG\theta^-\cdot\nu_\gamma \ d\mathcal{H}^1.\nonumber
\end{align}
By the divergence theorem for tangential vector fields on manifolds, we see that all integrals equate to integrals over $\Gamma^+$ and $\Gamma^-$ respectively. That is, we find 
\begin{align*}
 - \int_\gamma \mu\SG\mu^+\cdot\nu_\gamma \ d\mathcal{H}^1 =& - \int_{\Gamma^+} \div\left(\mu\SG\mu\right) \ d\mathcal{H}^2 \\ 
 =& - \int_{\Gamma^+} |\SG \mu|^2 \ d\mathcal{H}^2 - \int_{\Gamma^+} \mu \SL \mu \ d\mathcal{H}^2 \\
 =& - \int_{\Gamma^+} |\SG \mu|^2 \ d\mathcal{H}^2.
\end{align*}
The last equation holds because of \eqref{eq:sharp_d}. 
Keeping in mind that the orientation of $\gamma$ seen as the boundary of $\Gamma^+$ differs from the orientation of $\gamma$ seen as the boundary of $\Gamma^-$, similar calculations lead to \[ \int_\gamma \mu\SG\mu^-\cdot\nu_\gamma \ d\mathcal{H}^1 = - \int_{\Gamma^-} |\SG \mu|^2 \ d\mathcal{H}^2.\]

For the other integrals in \eqref{eq:int_curvVelo1}, we use \eqref{eq:sharp_e} and \eqref{eq:sharp_f} to calculate
\begin{align*}
  -\int_\gamma \theta\SG\theta^+\cdot\nu_\gamma \ d\mathcal{H}^1 =& - \int_{\Gamma^+} |\SG \theta|^2 \ d\mathcal{H}^2 - \int_{\Gamma^+} \theta \SL \theta \ d\mathcal{H}^2 \\
  =& - \int_{\Gamma^+} |\SG \theta|^2 \ d\mathcal{H}^2 - \int_{\Gamma^+} \theta \left(\partial_t v - q\right) \ d\mathcal{H}^2 \\
  =& - \int_{\Gamma^+} |\SG \theta|^2 \ d\mathcal{H}^2 - \frac{\delta}{4}\int_{\Gamma^+} \theta \partial_t \theta \ d\mathcal{H}^2 + \int_{\Gamma^+} \theta q  \ d\mathcal{H}^2
\end{align*}
and the analogue result \[\int_\gamma \theta\SG\theta^-\cdot\nu_\gamma \ d\mathcal{H}^1 = - \int_{\Gamma^-} |\SG \theta|^2 \ d\mathcal{H}^2 - \frac{\delta}{4}\int_{\Gamma^-} \theta \partial_t \theta \ d\mathcal{H}^2 + \int_{\Gamma^-} \theta q  \ d\mathcal{H}^2.\]

Plugging these findings in \eqref{eq:int_curvVelo1} yields 
\[ c_0 \int_\gamma \kappa_g \mathcal{V} \ d\mathcal{H}^1 = - \int_{\Gamma} |\SG \mu|^2 \ d\mathcal{H}^2 - \int_{\Gamma} |\SG \theta|^2 \ d\mathcal{H}^2 - \frac{\delta}{4}\int_{\Gamma} \theta \partial_t \theta \ d\mathcal{H}^2 + \int_{\Gamma} \theta q  \ d\mathcal{H}^2\]
and by \eqref{eq:sharp_energy_ineq_step1} we deduce
\begin{align*}
 \frac{d}{dt}\mathcal{F}(\varphi, v, \gamma) =& - \int_{\Gamma} |\SG \mu|^2 \ d\mathcal{H}^2 - \int_{\Gamma} |\SG \theta|^2 \ d\mathcal{H}^2 + \int_{\Gamma} \theta q  \ d\mathcal{H}^2 \\ \leq& \int_{\Gamma} \theta q  \ d\mathcal{H}^2.
\end{align*}
Similarly to the derivation of Lemma \ref{l:free_energy_ineq}, we also find 
\[\frac{d}{dt}\mathcal{F}_b(u) \leq \int_\Gamma u q \ d\mathcal{H}^2.\]
This leads to the free energy inequality for the sharp interface model stated in \ref{siproblem},
\begin{align}\label{eq:sharp_energy_ineq}
 \frac{d}{dt} \left[ \mathcal{F}(\varphi, v, \gamma) + \mathcal{F}_b(u) \right] \leq \int_\Gamma q(\theta-u) \ d\mathcal{H}^2.
\end{align}

Furthermore, equation \eqref{eq:sharp_a} and Reynolds' transport theorem allow us to deduce
\begin{align}\label{eq:sharp_lipid_mb}
\frac{d}{dt} \int_\Gamma \varphi \ d\mathcal{H}^2 &= \frac{d}{dt} \int_{\Gamma^+(t)} 1 \ d\mathcal{H}^2 - \frac{d}{dt} \int_{\Gamma^-(t)} 1 \ d\mathcal{H}^2 \\
&= \int_{\gamma(t)} \mathcal{V}\cdot \nu_\gamma \ d\mathcal{H}^2 - \int_{\gamma(t)} \mathcal{V}\cdot \nu_\gamma \ d\mathcal{H}^2 = 0. \nonumber
\end{align}
We remark that the signs in the above equation depend on the values of $\varphi$ in $\Gamma^+(t)$ and $\Gamma^-(t)$ respectively as well as the fact that $\gamma(t)$ is seen as the boundary of $\Gamma^+(t)$ for the first integral and as the boundary of $\Gamma^-(t)$ for the second integral.  

For energy densities chosen according to \eqref{eq:sharp_surface_energy} and \eqref{eq:sharp_bulk_energy}, equations \eqref{eq:sharp_b} and \eqref{eq:sharp_c} correspond to the mass balance equation for cytosolic cholesterol \eqref{IMB}, while the mass balance equation \eqref{MB2} for the membrane bound cholesterol corresponds to \eqref{eq:sharp_e}. We use these equations to deduce
 \begin{align}\label{eq:sharp_cholesterol_mb}
  \frac{d}{dt} \left[ \int_B u \ dx  + \int_\Gamma v \ d\mathcal{H}^2 \right] = 0.
 \end{align}

Equations \eqref{eq:sharp_lipid_mb} and \eqref{eq:sharp_cholesterol_mb} show that the conservation laws derived in Lemma \ref{l:free_energy_ineq} hold for the sharp interface model as well. Equation \eqref{eq:sharp_energy_ineq} is the analogue  to the general energy inequality in Lemma \ref{l:free_energy_ineq} for the sharp interface model, if one chooses the constitutive relations in such a way that the resulting free energy densities lead to \eqref{eq:sharp_surface_energy} and \eqref{eq:sharp_bulk_energy}.

%
\section{Numerical simulations}
\label{sec:simulations}

In this section we present numerical results for the reduced non-local
system \eqref{eq:nonlocal1}--\eqref{eq:uNonlocal} and for the fully
coupled bulk--surface model \eqref{eq:orig_mu}--\eqref{eq:q-1}. For the
first, we propose a discretization which is semi-implicit in time and
which is based on surface finite elements in space
\cite{DzEl07,DzEl13}. The details of the discretization are shown in
Section \ref{subsec:discretization}. We validate the numerical
method in Section  \ref{subsec:benchmarks} with two benchmark tests
and subsequently, in Section  \ref{subsec:simulation}, we present
simulation results showing properties of the model. Moreover, in
Section  \ref{subsec:diffuse}, we present a numerical simulation for a diffuse interface approximation of
the fully coupled bulk--surface model
\eqref{eq:orig_mu}--\eqref{eq:q-1}. For a large bulk diffusion
coefficient, the results show a good agreement with those obtained for
the reduced model in Section  \ref{subsec:simulation}. This further
justifies to use numerical simulations for the reduced model in order
to study the properties of the original model.

Cahn--Hilliard systems on stationary surfaces have been numerically
investigated by parametric finite elements \cite{RaVo06}, level set
techniques \cite{GrBeSa05} and with diffuse interface methods
\cite{RaVo06} mostly applied for the simulation of spinodal
decomposition and coarsening scenarios. For corresponding results on
moving surfaces we refer to \cite{ElSt10,MeMaRiHa13} (sharp interface
approach) and \cite{WaDu08} (diffuse interface approach),
respectively. Recently, a Cahn--Hilliard like system has been
numerically studied for the simulation of the influence of membrane
proteins on phase separation and coarsening on cell membranes
\cite{WiBaVo12}.

\subsection{Surface FEM Discretization}
\label{subsec:discretization}

For the discretization of the reduced system
\eqref{eq:nonlocal1}--\eqref{eq:uNonlocal} with $q$ given by
\eqref{eq:q-1} we develop a scheme similar to the one in \cite{RR},
where a related non-local reaction--diffusion system has been
numerically treated. To discretize in time, we introduce steps
$\tau_m$, $m=1,\ldots,M_T$. Denoting by $\varphi^{(m)}$, $\mu^{(m)}$,
$v^{(m)}$, $u^{(m)}$ the time discrete solution at time $t_m =
\sum\limits_{l=1}^m\tau_l$, the time discrete weak formulation then
reads
\begin{align}
  \label{eq:Weak1}
  & \int_\Gamma\frac{\varphi^{(m+1)}}{\tau_{m+1}}\psi
  + \int_\Gamma \SG \mu^{(m+1)}\cdot \SG
  \psi 
  = \int_\Gamma\frac{\varphi^{(m)}}{\tau_{m+1}}\psi,\\ 
  \label{eq:Weak2}
  &-\int_\Gamma\mu^{(m+1)} \eta 
  + \eps \int_\Gamma\SG \varphi^{(m+1)} \cdot \SG \eta 
  + \frac1\eps  \int_{\Gamma} W''(\varphi^{(m)}) \varphi^{(m +
    1)} \eta
  - \frac2\delta\int_\Gamma v^{(m+1)} \eta \\
  &\quad + \frac1\delta\int_\Gamma \varphi^{(m+1)} \eta
  =   \frac1\eps \int_{\Gamma} ( W''(\varphi^{(m)}) \varphi^{(m)}
  - W'(\varphi^{(m)}) )\eta 
  - \frac1\delta\int_\Gamma  \eta,\nonumber\\
  \label{eq:Weak3}
  & \int_\Gamma\frac{v^{(m+1)}}{\tau_{m+1}}\zeta
  + \frac4\delta\int_\Gamma \SG v^{(m+1)}\cdot \SG
  \zeta 
  - \frac2\delta\int_\Gamma \SG \varphi^{(m+1)}\cdot \SG
  \zeta \\
  \quad& + \int_\Gamma (c_1 u^{(m)} + c_2)v^{(m+1)}\zeta 
  = \int_\Gamma\frac{v^{(m)}}{\tau_{m+1}}\zeta 
  + \int_\Gamma c_1 u^{(m)}\zeta \nonumber
\end{align}
for all $\psi, \eta, \zeta \in H^1(\Gamma)$, where the non-local term
\begin{equation}
  \label{eq:nonLocalDiscrete}
  u^{(m)} = \frac1{|B|}\left(M - \int_\Gamma v^{(m)}\right)
\end{equation}
is treated fully explicitly. 

For the spatial discretization, we introduce a triangulation
$\Gamma_h$ of the surface $\Gamma$, and we use linear surface finite
elements to obtain from 
\eqref{eq:Weak1}--\eqref{eq:nonLocalDiscrete} a linear system of equations for
the coefficients of the discrete solutions $\mu_h^{(m+1)}$,
$\varphi_h^{(m+1)}$, $v_h^{(m+1)}$ with respect to the standard Lagrange
basis. The resulting linear system is solved by a direct solver or a
stabilized bi-conjugate gradient (BiCGStab) method depending on the
number of unknowns. Furthermore, we apply a simple time
adaptation strategy, where time steps $\tau_{m+1}$ are chosen (within
bounds) inversely proportional to 
\begin{equation*}
  \max_{x \in \Gamma_h}
  \frac{|\varphi^{(m)}_h(x) - \varphi^{(m-1)}_h(x)|}{\tau_m},
\end{equation*}
see e.g. \cite{RaVo06}. The above numerical scheme is implemented in the
adaptive FEM toolbox AMDiS \cite{VeVo07}.

\subsection{Benchmark tests}
\label{subsec:benchmarks}

In this section we consider a triangulation $\Gamma_h = S^2_h$ of the
unit sphere $S^2$, and we use the following set of parameters:
\begin{equation}
  \label{eq:parametersBasic}
  c_1 = c_2 = 500,
  \quad \varepsilon = 0.02,
  \quad M = 5 |B| = \frac{20\pi}{3},
  \quad \delta = 0.02.
\end{equation}

\subsubsection{Analytic solution for a prescribed right hand
  side}

In order to verify the validity of the above numerical scheme, we
define right hand sides for \eqref{eq:nonlocal1} and
\eqref{eq:nonlocal3} such that an analytic solution for the resulting
system can be given. To be more precise, we let 
\begin{equation}
  \label{eq:phiAnalytic}
  \varphi(x,t) := \tanh\left(\frac{\vartheta(x) + \beta
      -t}{\sqrt{2}\eps}\right) 
\end{equation}
for $x \in S^2$ and $t \in [0,T)$. Moreover, $\beta$ is an angle
related to the initial value $\varphi(x,0)$, and by $\vartheta =
\vartheta(x)$ we denote the angle given by
\begin{equation*}
  \vartheta(x) := \arccos(x_3) \in [0,\pi]
\end{equation*}
for $x = (x_1, x_2, x_3) \in S^2$. Furthermore, we let $v(x,t) :=
\frac12(1 + \varphi(x,t))$ such that $\theta = 0$ holds. Then we
define $F_1$, $F_2 : S^2 \times [0,T] \to \R$ such that
\begin{alignat}{2}
  \label{eq:nonlocal1F}
  \pd_t \varphi - \SL \mu  &= F_1
  &\qquad\text{on } S^2 \times (0,T]&,\\
  \label{eq:nonlocal2F}
  \mu &= - \eps \SL \varphi + \eps^{-1}W'(\varphi)
  &\text{on } S^2 \times (0,T]&,\\
  \label{eq:nonlocal3F}
  \pd_t v - q(u,v) &= F_2
  &\text{on } S^2 \times (0,T]&
\end{alignat}
hold with $u = u(t)$ given by
\begin{equation}\label{eq:5.9a}
  u = \frac{3}{4\pi}\left(M - \int_{S^2} v\right).
\end{equation}
In this way we obtain expressions for $F_1$ and $F_2$, which are
incorporated in the numerical scheme described in Section
\ref{subsec:discretization}. Finally,  we use the adopted scheme to
find approximate numerical solutions $\varphi_h$, $\mu_h$, $v_h$ to
\eqref{eq:nonlocal1F}--\eqref{eq:5.9a} for initial conditions
for $\varphi_h$, $v_h$ given by corresponding values determined by
\eqref{eq:phiAnalytic} and $v = \frac12(1 + \varphi)$. Note that the expression
for $F_2$ involves an integral of $\varphi$ which is numerically
approximated in polar coordinates in every time step. 

With the usual Lagrange interpolation operator $I_h$, we define the
relative errors 
\begin{align*}
  e_\infty(t) &:= \frac{\|I_h\varphi(\cdot,t) - \varphi_h(\cdot,t)\|_{L^\infty(S_h^2)}}
  {\|\varphi(\cdot,t)\|_{L^\infty(S^2)}},\\
  e_1(t) &:= \frac{\|I_h\varphi(\cdot,t) - \varphi_h(\cdot,t)\|_{H^1(S_h^2)}}
  {\|I_h\varphi(\cdot,t)\|_{H^1(S_h^2)}}\\
\end{align*}
of $\varphi_h(\cdot,t)$ in the $L^\infty(S_h^2)$-, 
$H^1(S_h^2)$-norms, respectively, and we consider their values
for simulations with $T = \frac{\pi}{4}$, $\beta = -\frac{\pi}{4}$ and
different spatial resolutions in Table \ref{tab:errorsRHS}. The results
indicate a linear dependence of the $L^\infty(S_h^2)$- and
$H^1(S_h^2)$-errors, respectively, on the grid-size, as expected for
elliptic problems. As a further illustration of this benchmark, in
Figure \ref{fig:phiRHS}, we see contour plots of $\varphi(\cdot,0)$ and
$\varphi(\cdot,T)$, respectively. We remark that in the following we
use a grid with 98306 vertices for all simulations with spherical
geometry. 

\begin{table}[h]
  \begin{center}
    \begin{tabular}[h]{| c | c | c | c | c | c | c |
        c | c | c | c | c | c | c | c | c | c | c |}
      \hline
      number of vertices & $6146$ & $24578$ & $98306$ &
      $393218$ \\
      \hline
       $e_\infty(T)$ & $0.245005$ & $0.0284765$ & $0.0107314$ & $0.00568034$  \\
      \hline
       $e_1(T)$ & $0.2284131488$ & $0.0747088125$ & $0.0407739003$ & $0.0211030453$ \\
      \hline
    \end{tabular}
    \vskip5pt \caption{\footnotesize Relative errors for simulations
      of \eqref{eq:nonlocal1F}--\eqref{eq:5.9a} with different
      resolutions.}\label{tab:errorsRHS}
  \end{center}
\end{table}

\begin{figure}[here]
  \centerline{
    \hfill
    \subfigure[$\varphi(t = 0)$]{
      \includegraphics*[width=0.2\textwidth]{./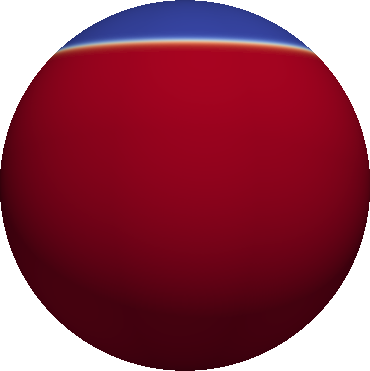}
    }
    \hfill
    \subfigure[$\varphi(t \approx \frac\pi4)$]{
      \includegraphics*[width=0.2\textwidth]{./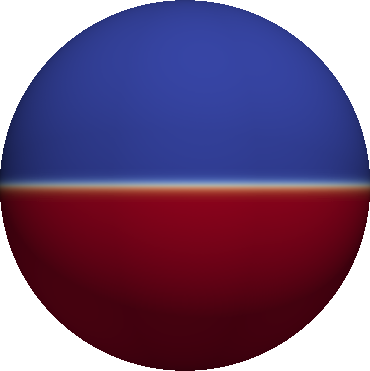}
    }
    \hfill
    \includegraphics*[width=0.08\textwidth]{./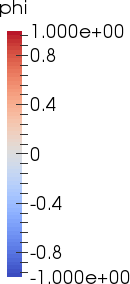}
    \hfill
  }
  \caption{\label{fig:phiRHS}\footnotesize Benchmark simulation with
    prescribed right hand side in
    \eqref{eq:nonlocal1F}--\eqref{eq:nonlocal3F}. Contour plots of
    initial values of analytic solution \eqref{eq:phiAnalytic} and of
    values at time $T$.} 
\end{figure}

\subsubsection{Validation of $\int v$}

We return to \eqref{eq:nonlocal1}--\eqref{eq:uNonlocal} and remark
that one easily verifies from \eqref{eq:3.10-a} that
\begin{equation*}
  z := \int_\Gamma v
\end{equation*}
fulfills the ODE
\begin{equation}
  \label{eq:odeIntV}
  \dot z = c_1 \frac{M |\Gamma|}{|B|} - \left(
    c_1 \frac{|\Gamma|}{|B|} + c_2 + c_1 \frac{M}{|B|}
  \right)z
  + \frac{c_1}{|B|}z^2.
\end{equation}
We compare solutions of the above ODE with
\begin{equation*}
  \int_{S^2_h}v_h
\end{equation*}
obtained by numerical solution of
\eqref{eq:Weak1}--\eqref{eq:nonLocalDiscrete}, where we have chosen
the initial conditions
\begin{equation*}
  \varphi(\cdot,0) = -0.25 +
  \mathcal R, \quad v(\cdot, 0) = 0.25.
\end{equation*}
Thereby, $\mathcal R : \Gamma_h \to [-0.001,0.001]$ provides a
perturbation given by an (irregular and nonperiodic) oscillation around zero.
In Figure \ref{fig:odeIntV}, $z_h =
\int_{S^2_h}v_h$ obtained by numerical simulation of
\eqref{eq:Weak1}--\eqref{eq:nonLocalDiscrete} is compared with a
numerical solution of the ODE \eqref{eq:odeIntV} with initial
condition
\begin{equation*}
  z(0) = \int_{S^2}v(\cdot,0)
\end{equation*}
obtained with Maple\texttrademark \cite{Maple}. The excellent
agreement further justifies the chosen numerical scheme.

\begin{figure}[here]
  \centerline{
    \hfill
    \includegraphics*[width=0.38\textwidth]{./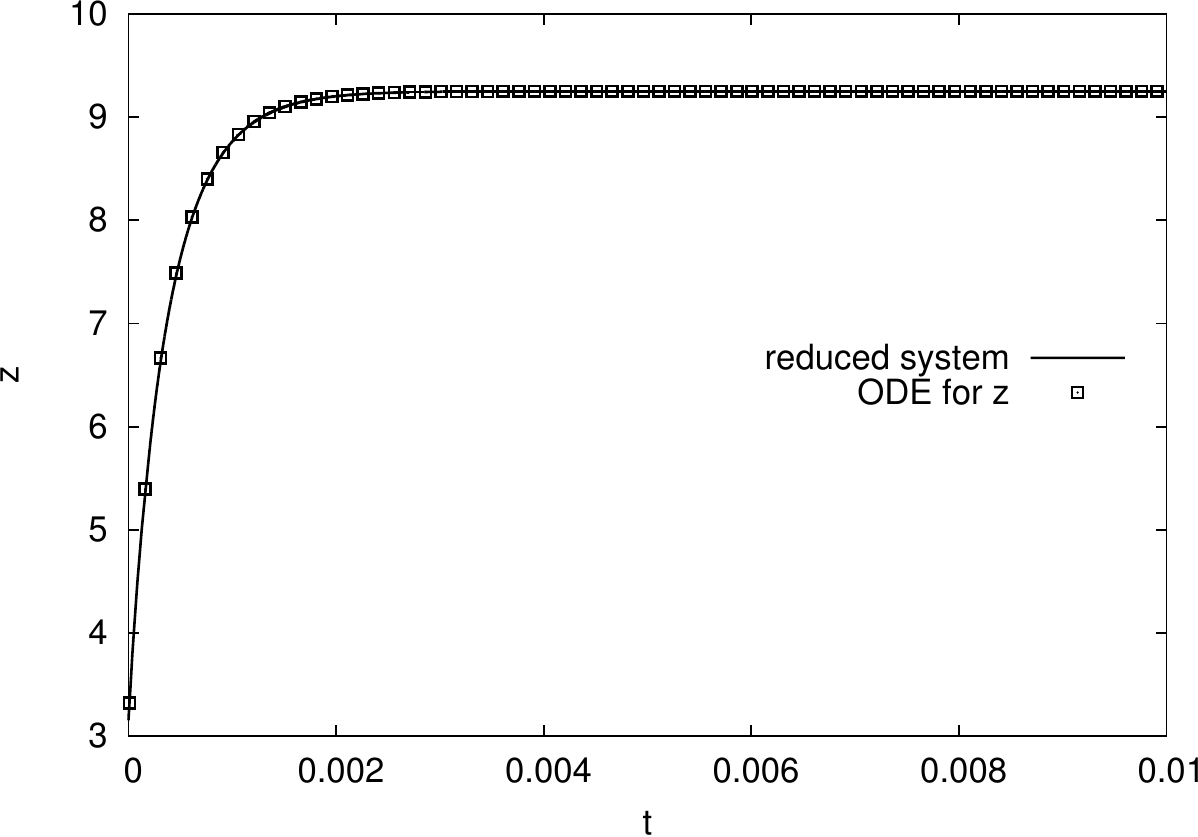}
    \hfill
  }
  \caption{\label{fig:odeIntV}\footnotesize Comparison of a numerical
    solution of the ODE \eqref{eq:odeIntV} with $\int_{S^2_h}v_h$
    obtained by simulation of
    \eqref{eq:nonlocal1}--\eqref{eq:uNonlocal}.}  
\end{figure}

\subsection{Simulation results}
\label{subsec:simulation}

\subsubsection{Variation of parameters}

We use the basic set of parameters given in \eqref{eq:parametersBasic}.
Furthermore, we use the initial conditions $\varphi(\cdot,0) = \hat \varphi +
\mathcal R$, $v(\cdot, 0) = \frac14$, where $\hat \varphi = -0.5$ and $\mathcal R :
\Gamma_h \to [-0.001,0.001]$ again denotes a perturbation by an (irregular and nonperiodic) oscillation around zero. In Figure \ref{fig:basic}, we
see corresponding results, where contour plots of the discrete solutions
$\varphi_h(\cdot, t)$ (first row) and $v_h(\cdot, t)$ (second row) are
displayed for various times $t$. The evolution shows a spinodal
decomposition with subsequent interrupted coarsening scenario.
\begin{figure}[here]
  \centerline{
    \hfill
    \subfigure[$\varphi_h(\cdot,t = 0)$]{
      \includegraphics*[width=0.16\textwidth]{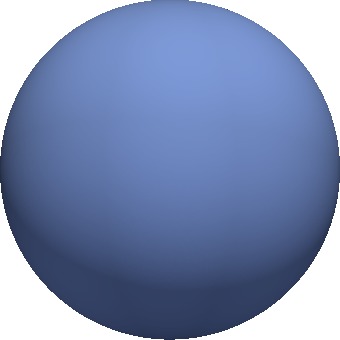}
    }
    \hfill
    \subfigure[$\varphi_h(\cdot,t \approx 0.0036)$]{
      \includegraphics*[width=0.16\textwidth]{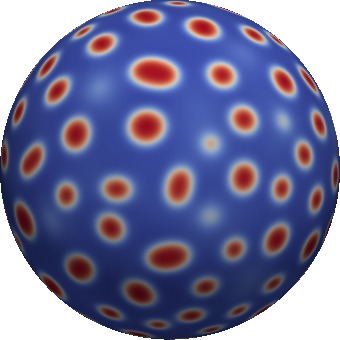}
    }
    \hfill
    \subfigure[$\varphi_h(\cdot,t \approx 0.0107)$]{
      \includegraphics*[width=0.16\textwidth]{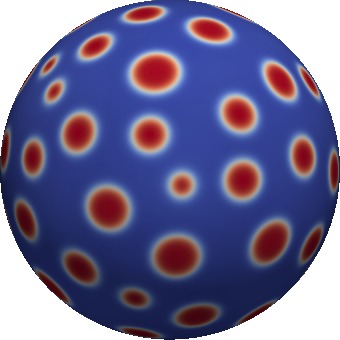}
    }
    \hfill
    \subfigure[$\varphi_h(\cdot,t \approx 0.1177)$]{
      \includegraphics*[width=0.16\textwidth]{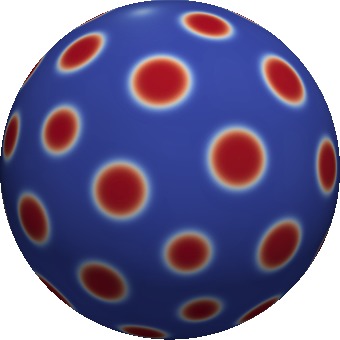}
    }
    \hfill
    \subfigure[$\varphi_h(\cdot,t \approx 402.4228)$]{
      \includegraphics*[width=0.16\textwidth]{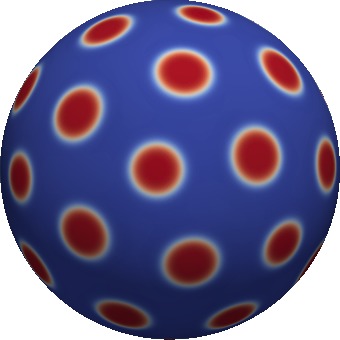}
    }
    \hfill
    \subfigure[]{
      \includegraphics*[height=0.09\textheight]{./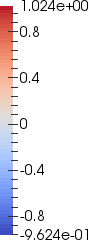}
    }
    \hfill
  }
  \vspace{3ex}
  \centerline{
    \hfill
    \subfigure[$v_h(\cdot,t = 0)$]{
      \includegraphics*[width=0.16\textwidth]{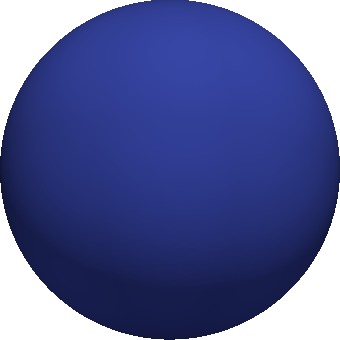}
    }
    \hfill
    \subfigure[$v_h(\cdot,t \approx 0.0036)$]{
      \includegraphics*[width=0.16\textwidth]{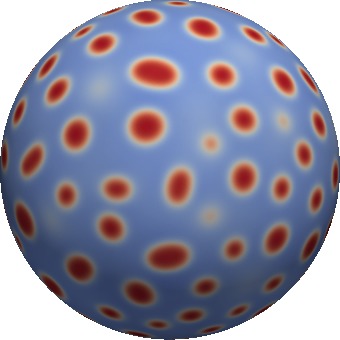}
    }
    \hfill
    \subfigure[$v_h(\cdot,t \approx 0.0107)$]{
      \includegraphics*[width=0.16\textwidth]{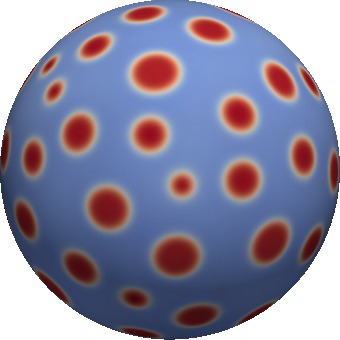}
    }
    \hfill
    \subfigure[$v_h(\cdot,t \approx 0.1177)$]{
      \includegraphics*[width=0.16\textwidth]{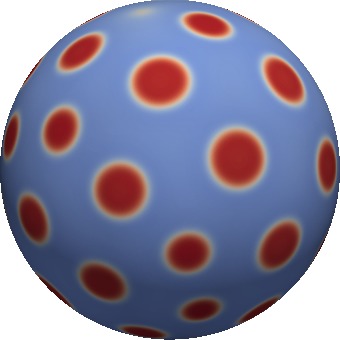}
    }
    \hfill
    \subfigure[$v_h(\cdot,t \approx 402.4228)$]{
      \includegraphics*[width=0.16\textwidth]{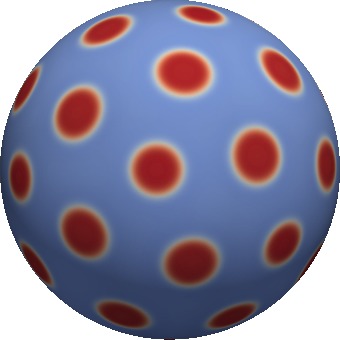}
    }
    \hfill
    \subfigure[]{
      \includegraphics*[height=0.09\textheight]{./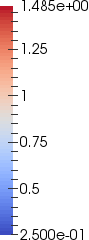}
    }
    \hfill
  }
  \caption{\label{fig:basic}\footnotesize Numerical results for
    $\varphi(\cdot,0) = -0.5 + \mathcal R$. First row:
    contour plots of $\varphi(\cdot,t)$ for several choices of times
    $t$; second row: corresponding contour plots of $v(\cdot, t)$.}
\end{figure}

The geometric shape of the particles can drastically change, if one
changes the values of $\varphi(\cdot,0)$. In a second example, we have
used $\varphi(\cdot,0) = 0 + \mathcal R$, while all
remaining parameters have not been changed. The corresponding
numerical results can be seen in Figure \ref{fig:basic2}.

\begin{figure}[here]
  \centerline{
    \hfill
    \subfigure[$\varphi_h(\cdot,t = 0)$]{
      \includegraphics*[width=0.16\textwidth]{./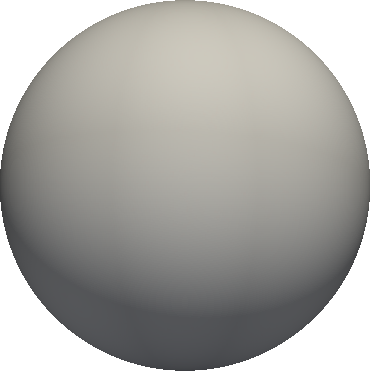}
    }
    \hfill
    \subfigure[$\varphi_h(\cdot,t \approx 0.0038)$]{
      \includegraphics*[width=0.16\textwidth]{./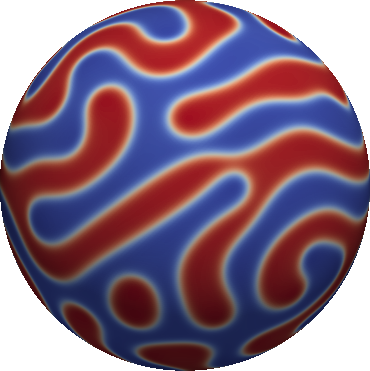}
    }
    \hfill
    \subfigure[$\varphi_h(\cdot,t \approx 0.0125)$]{
      \includegraphics*[width=0.16\textwidth]{./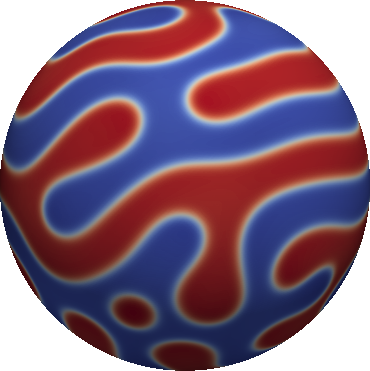}
    }
    \hfill
    \subfigure[$\varphi_h(\cdot,t \approx 0.1035)$]{
      \includegraphics*[width=0.16\textwidth]{./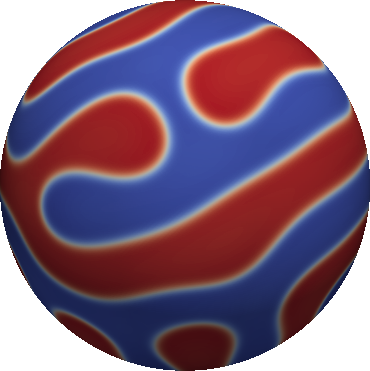}
    }
    \hfill
    \subfigure[$\varphi_h(\cdot,t \approx 379.5465)$]{
      \includegraphics*[width=0.16\textwidth]{./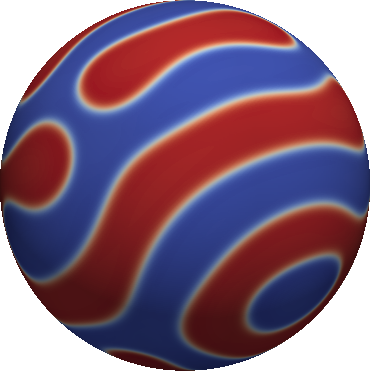}
    }
    \hfill
    \subfigure[]{
      \includegraphics*[height=0.09\textheight]{./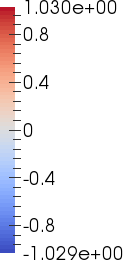}
    }
    \hfill
  }
  \vspace{3ex}
  \centerline{
    \hfill
    \subfigure[$v_h(\cdot,t = 0)$]{
      \includegraphics*[width=0.16\textwidth]{./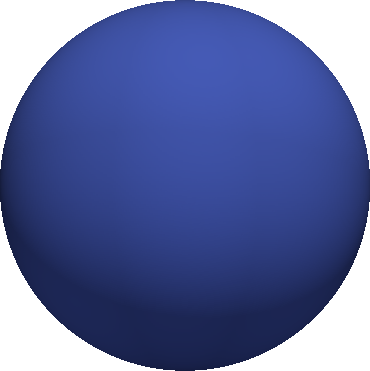}
    }
    \hfill
    \subfigure[$v_h(\cdot,t \approx 0.0038)$]{
      \includegraphics*[width=0.16\textwidth]{./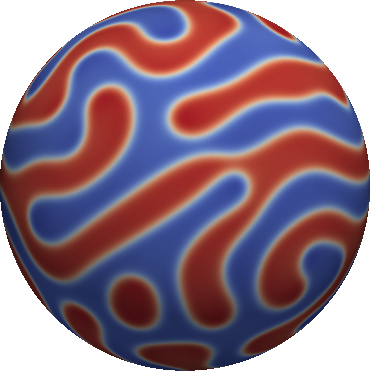}
    }
    \hfill
    \subfigure[$v_h(\cdot,t \approx 0.0125)$]{
      \includegraphics*[width=0.16\textwidth]{./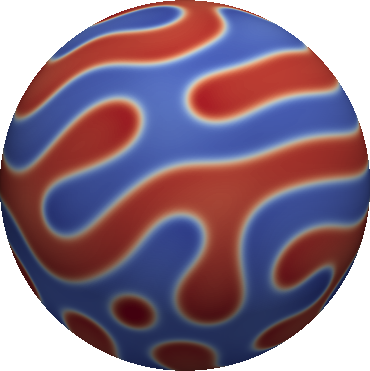}
    }
    \hfill
    \subfigure[$v_h(\cdot,t \approx 0.1035)$]{
      \includegraphics*[width=0.16\textwidth]{./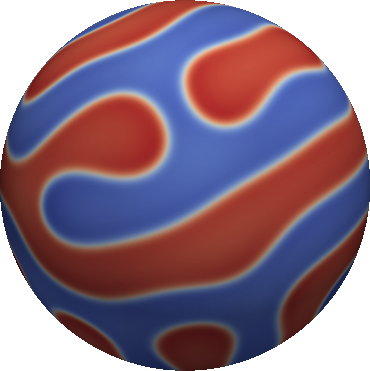}
    }
    \hfill
    \subfigure[$v_h(\cdot,t \approx 379.5465)$]{
      \includegraphics*[width=0.16\textwidth]{./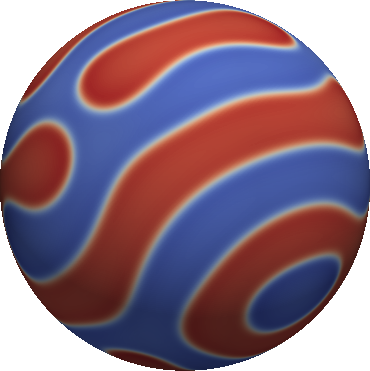}
    }
    \hfill
    \subfigure[]{
      \includegraphics*[height=0.09\textheight]{./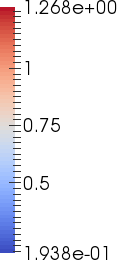}
    }
    \hfill
  }
  \caption{\label{fig:basic2}\footnotesize Numerical results for
    $\varphi(\cdot,0) = 0 + \mathcal R$. First row:
    contour plots of $\varphi(\cdot,t)$ for several choices of times
    $t$; second row: corresponding contour plots of $v(\cdot, t)$.}
\end{figure}

In the following, we compare almost stationary states for simulations
with varying parameters, in order to illustrate the properties
of the model and its solutions. We use the basic set of parameters for
the results shown in Figure \ref{fig:basic} and modify one
parameter. Let
\begin{equation*}
  \bar\varphi := \int_\Gamma \varphi(\cdot,0) \,=\, \int_\Gamma \varphi .
\end{equation*}

\paragraph{\bf Varying $\bar\varphi$} We change $\bar\varphi$ by changing
the initial condition for $\varphi$, or rather $\hat \varphi$. In Figure
\ref{fig:varyPhi} one observes the almost stationary states from the
previous two examples with $\hat\varphi = 0$ and $\hat\varphi = -0.5$
and examples with intermediate values, where one can see a crossover
from circular lipid rafts to stripe like patterns. For $\hat\varphi =
-0.75$ the system allows for a constant stationary state with order
parameter away from the classical phases $\varphi \in \{-1,1\}$.

\begin{figure}[here]
  \centerline{
    \hfill
    \subfigure[$\hat\varphi = 0$]{
      \includegraphics*[width=0.16\textwidth]{./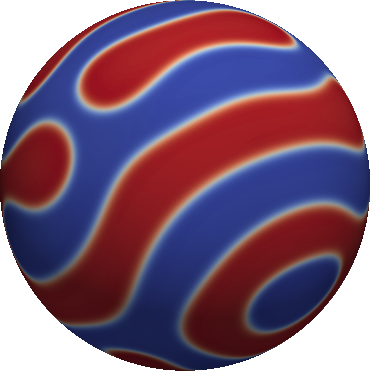}
    }
    \hfill
    \subfigure[$\hat\varphi = -0.1$]{
      \includegraphics*[width=0.16\textwidth]{./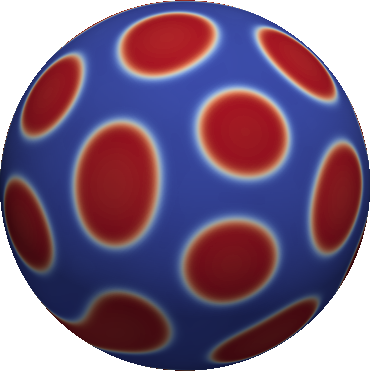}
    }
    \hfill
    \subfigure[$\hat\varphi = -0.25$]{
      \includegraphics*[width=0.16\textwidth]{./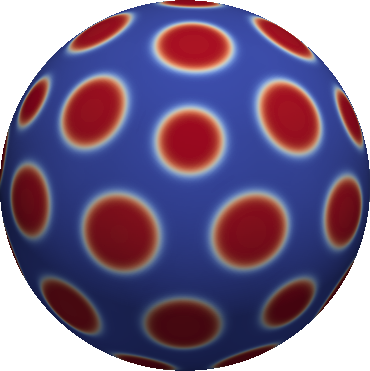}
    }
    \hfill
    \subfigure[$\hat\varphi = -0.5$]{
      \includegraphics*[width=0.16\textwidth]{./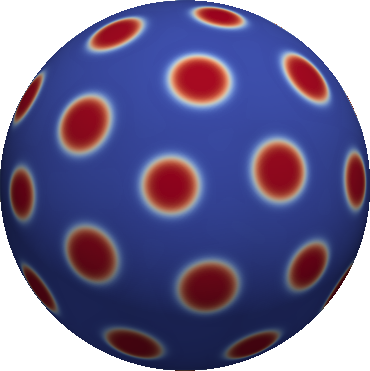}
    }
    \hfill
    \subfigure[$\hat\varphi = -0.75$]{
      \includegraphics*[width=0.16\textwidth]{./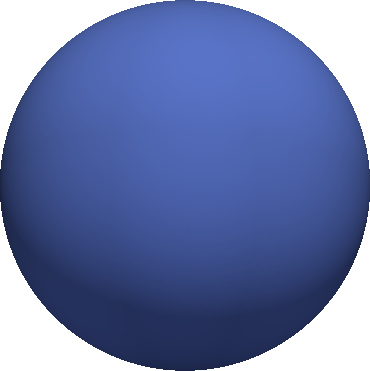}
    }
    \hfill
    \subfigure[]{
      \includegraphics*[height=0.09\textheight]{./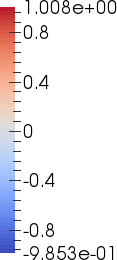}
    }
    \hfill
  }
  \caption{\label{fig:varyPhi}\footnotesize Almost stationary discrete
    solutions $\varphi_h$ for
    different values of $\hat\varphi$.} 
\end{figure}

\paragraph{\bf Varying $\delta$} For $\hat\varphi = -0.5$ we consider
almost stationary discrete order parameter $\varphi_h$ for different
values for the parameter $\delta$. In Figure \ref{fig:varyDelta}, for
large $\delta$ the almost stationary $\varphi_h$ has one lipid raft,
as one would expect for classical Cahn--Hilliard system on the
sphere. For decreasing but still positive values of $\delta$ we expect
to approach stationary solution of Ohta--Kawasaki based dynamics, as
shown in Section \ref{subsec:exchangeTerm}. For a more detailed
comparison with almost stationary states of Ohta--Kawasaki based
dynamics we refer to the subsequent Section \ref{subsubsec:OhtaKawasaki}. 

\begin{figure}[here]
  \centerline{
    \hfill
    \subfigure[$\delta = 0.3$]{
      \includegraphics*[width=0.16\textwidth]{./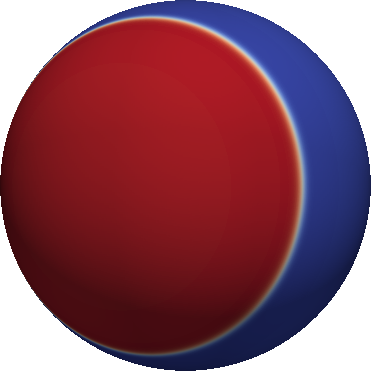}
    }
    \hfill
    \subfigure[$\delta = 0.1$]{
      \includegraphics*[width=0.16\textwidth]{./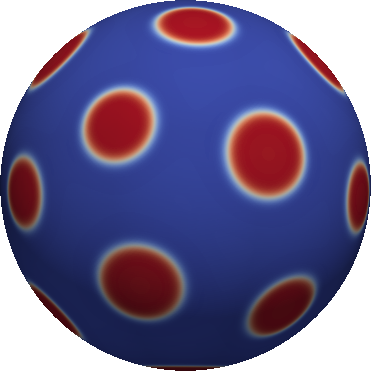}
    }
    \hfill
    \subfigure[$\delta = 0.02$]{
      \includegraphics*[width=0.16\textwidth]{./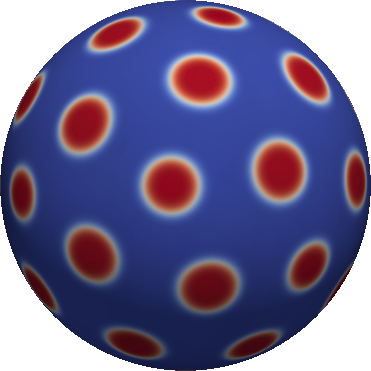}
    }
    \hfill
    \subfigure[$\delta = 0.002$]{
      \includegraphics*[width=0.16\textwidth]{./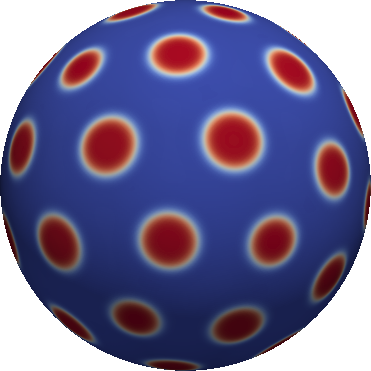}
    }
    \hfill
    \subfigure[]{
      \includegraphics*[height=0.09\textheight]{./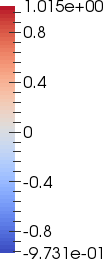}
    }
    \hfill
  }
  \caption{\label{fig:varyDelta}\footnotesize Almost stationary discrete
    solutions $\varphi_h$ for different values of $\delta$.} 
\end{figure}

\paragraph{\bf Varying $c_1$} Returning to the previous standard
parameters $\hat\varphi = -0.5$ and $\delta = 0.02$ we investigate the
influence of different values of $c_1$ on the almost stationary
discrete states. Increasing $c_1$ corresponds to increasing the number of
lipid rafts as shown in Figure \ref{fig:varyC1}. 

\begin{figure}[here]
  \centerline{
    \hfill
    \subfigure[$c_1 = 5$]{
      \includegraphics*[width=0.16\textwidth]{./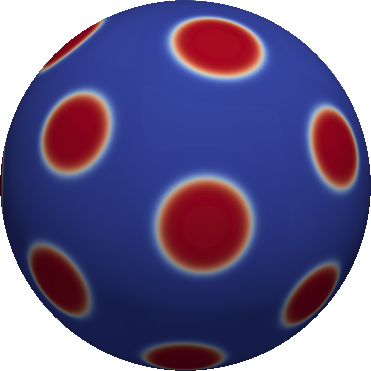}
    }
    \hfill
    \subfigure[$c_1 = 100$]{
      \includegraphics*[width=0.16\textwidth]{./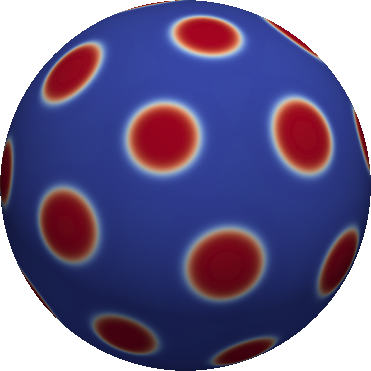}
    }
    \hfill
    \subfigure[$c_1 = 500$]{
      \includegraphics*[width=0.16\textwidth]{./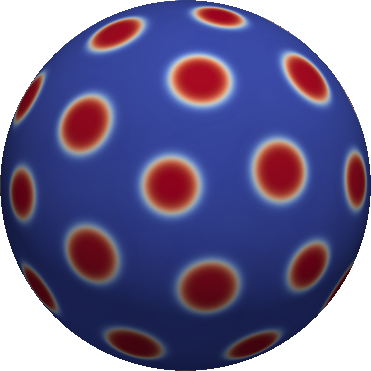}
    }
    \hfill
    \subfigure[$c_1 = 2000$]{
      \includegraphics*[width=0.16\textwidth]{./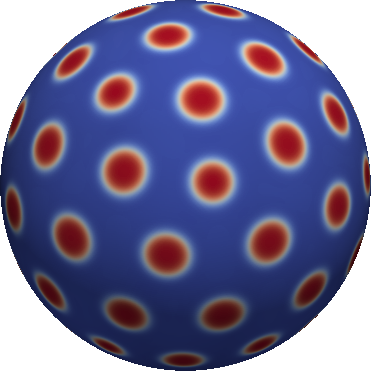}
    }
    \hfill
    \subfigure[]{
      \includegraphics*[height=0.09\textheight]{./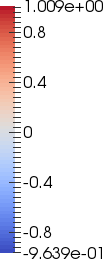}
    }
    \hfill
  }
  \caption{\label{fig:varyC1}\footnotesize Almost stationary discrete
    solutions $\varphi_h$ for different values of $c_1$.} 
\end{figure}

\paragraph{\bf Varying $c_2$} With our standard choice $c_1 = 500$ we
obtain for variation of values for $c_2$ similar results as for the
previous examples (varying $c_1$). In Figure \ref{fig:varyC2}, one
observes increasing number of rafts of decreasing sizes for increasing
$c_2$. 

From the analysis in Section  \ref{subsec:exchangeTerm} we
conclude that increasing $c_1$ or $c_2$ corresponds to increasing the
non-local energy contribution in the Ohta--Kawasaki functional
\eqref{eq:energyOK}, see also \eqref{eq:OKParam1}, which is expected
to describe the dynamics in the limit $\delta \to 0$. This fact
provides an explanation of the increasing number of particles for
increasing $c_1,c_2$.

\begin{figure}[here]
  \centerline{
    \hfill
    \subfigure[$c_2 = 5$]{
      \includegraphics*[width=0.16\textwidth]{./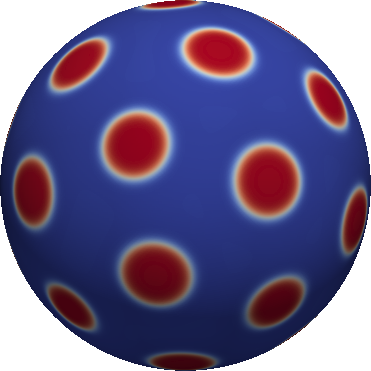}
    }
    \hfill
    \subfigure[$c_2 = 100$]{
      \includegraphics*[width=0.16\textwidth]{./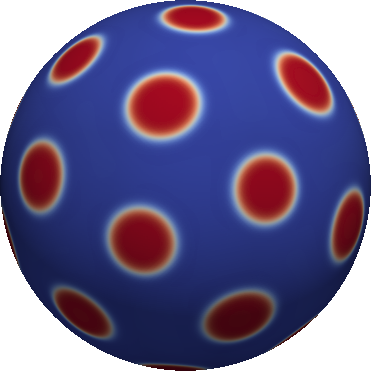}
    }
    \hfill
    \subfigure[$c_2 = 500$]{
      \includegraphics*[width=0.16\textwidth]{./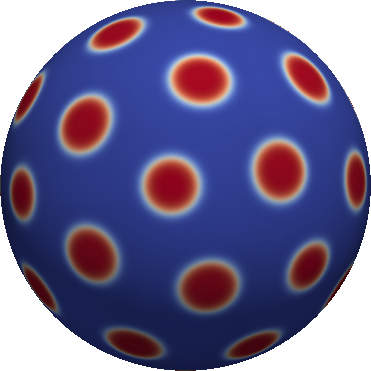}
    }
    \hfill
    \subfigure[$c_2 = 2000$]{
      \includegraphics*[width=0.16\textwidth]{./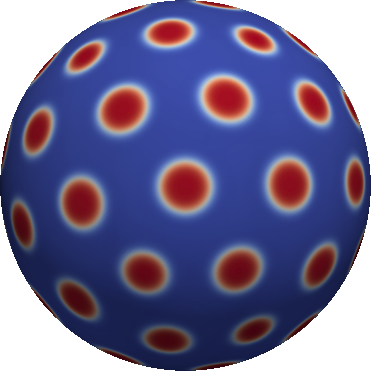}
    }
    \hfill
    \subfigure[]{
      \includegraphics*[height=0.09\textheight]{./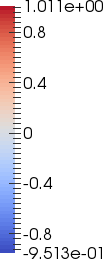}
    }
    \hfill
  }
  \caption{\label{fig:varyC2}\footnotesize Almost stationary discrete
    solutions $\varphi_h$ for different values of $c_2$.} 
\end{figure}

\subsubsection{Comparison with Ohta--Kawasaki model}
\label{subsubsec:OhtaKawasaki}

For the result in Figure \ref{fig:compareToOK} we have investigated the
following scenario. We run a simulation of
\eqref{eq:nonlocal1}--\eqref{eq:uNonlocal} with parameter values as in
\eqref{eq:parametersBasic} except $\delta = 0.0001$ towards an almost
stationary state (see Figure \ref{fig:compareToOK}, left). With the
resulting discrete order parameter $\varphi_h$ we continue a
simulation with Ohta--Kawasaki-based dynamics with parameters
according to \eqref{eq:OKParam1} and \eqref{eq:OKParam2} again towards
an almost stationary state (see Figure \ref{fig:compareToOK},
middle). The difference between these two stationary results for
$\varphi_h$ is displayed in Figure \ref{fig:compareToOK} (right). These
results confirm the analytic results of Section
\ref{subsec:exchangeTerm}. 
\begin{figure}[here]
  \hfill
  \includegraphics*[height=0.12\textheight]{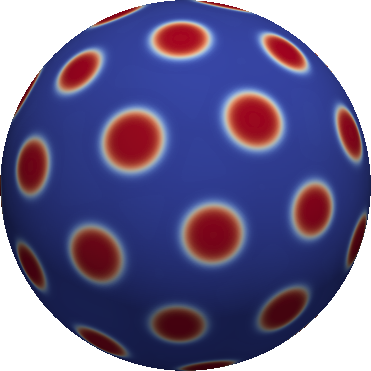}
  \hfill
  \includegraphics*[height=0.12\textheight]{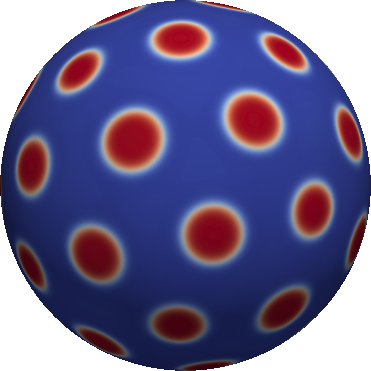}
  \hfill
  \includegraphics*[height=0.12\textheight]{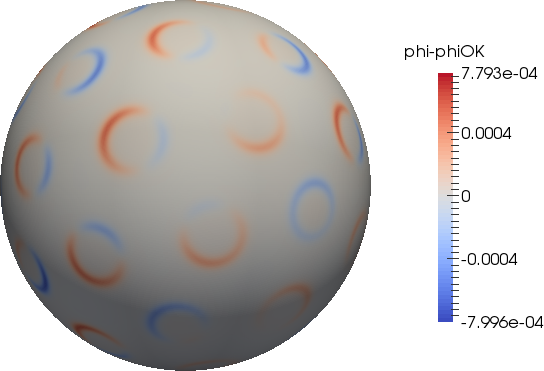}
  \hfill
  \caption{\label{fig:compareToOK}\footnotesize Almost stationary
    $\varphi_h$ obtained from a simulation of the reduced system
    (left) and from subsequent Ohta--Kawasaki-based dynamics (middle),
    difference between the previous numerical solutions (right).}
\end{figure}

\subsubsection{Non--spherical membrane shape}

In a further example, we show in Figure \ref{fig:nonSphere} results
with the standard parameter set  \eqref{eq:parametersBasic} for the case that $\Gamma$ is not a sphere. One obtains similar patterns as for the
sphere. However, this configuration appears to be more stable than the
previous spherical ones, where sometimes very slow arrangements could
take place. This behavior has not been observed for this geometry. We
remark that for this simulation, we have chosen a similar resolution
as for the previous examples with spherical geometry. 

\begin{figure}[here]
  \centerline{
    \hfill
    \subfigure[$\varphi_h(\cdot,t = 0)$]{
      \includegraphics*[width=0.2\textwidth]{./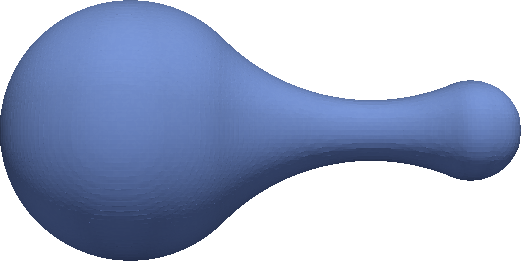}
    }
    \hfill
    \subfigure[$\varphi_h(\cdot,t \approx 0.0053)$]{
      \includegraphics*[width=0.2\textwidth]{./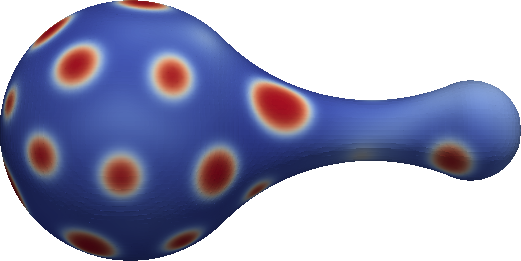}
    }
    \hfill
    \subfigure[$\varphi_h(\cdot,t \approx 1.4133)$]{
      \includegraphics*[width=0.2\textwidth]{./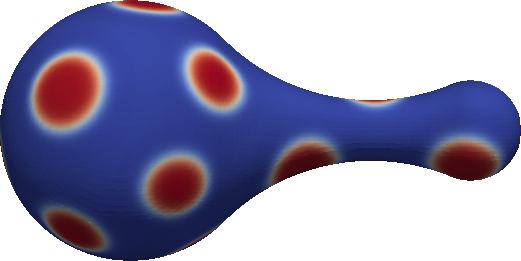}
    }
    \hfill
    \subfigure[$\varphi_h(\cdot,t \approx 55.3218)$]{
      \includegraphics*[width=0.2\textwidth]{./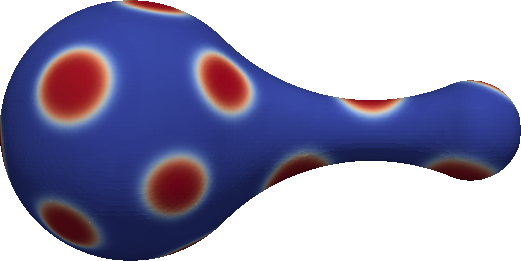}
    }
    \hfill
    \subfigure[]{
      \includegraphics*[height=0.09\textheight]{./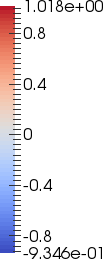}
    }
    \hfill
  }
  \caption{\label{fig:nonSphere}\footnotesize Numerical results for
    a non--spherical surface and $\varphi(\cdot,0) = -0.5 + \mathcal
    R$. Contour plots of $\varphi(\cdot,t)$ for several choices of
    times $t$.}
\end{figure}

\subsubsection{Comparison with energy decreasing dynamics}\label{sec:5.3.4}

Here, we study a choice of $q$ leading to an energy decreasing
evolution. From the analysis in Section
\ref{subsec:energyDecreasingEvolution}, we expect that solutions of
the system \eqref{eq:nonlocal1}--\eqref{eq:uNonlocal} generically converge
to configurations with one lipid phase concentrated in a single geodesic
ball. For this purpose, we choose
\begin{equation*}
  q = -c (\theta - u)
\end{equation*}
with $c=500$, corresponding to \eqref{TCQ} and $f_b(u) = \frac12 u^2$ in
\eqref{eq:mu_u}. The results in this case are displayed in Figure
\ref{fig:thermo} and illustrate the evolution towards an almost
stationary state with a single spherical raft particle. 

\begin{figure}[here]
  \centerline{
    \hfill
    \subfigure[$\varphi_h(\cdot,t = 0)$]{
      \includegraphics*[width=0.16\textwidth]{./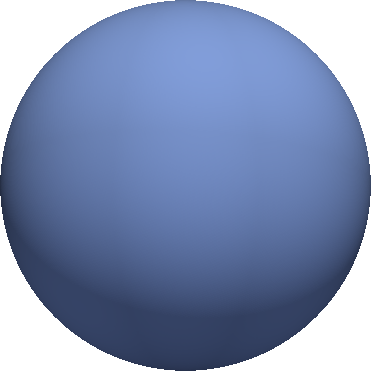}
    }
    \hfill
    \subfigure[$\varphi_h(\cdot,t \approx 0.0030)$]{
      \includegraphics*[width=0.16\textwidth]{./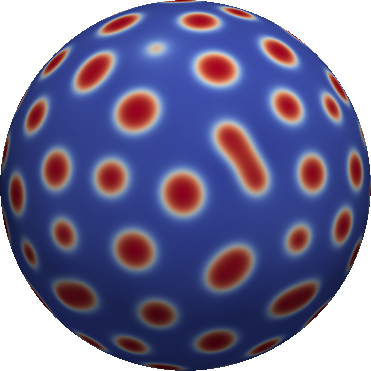}
    }
    \hfill
    \subfigure[$\varphi_h(\cdot,t \approx 0.0103)$]{
      \includegraphics*[width=0.16\textwidth]{./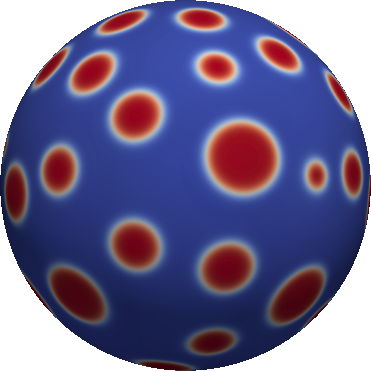}
    }
    \hfill
    \subfigure[$\varphi_h(\cdot,t \approx 1.0166)$]{
      \includegraphics*[width=0.16\textwidth]{./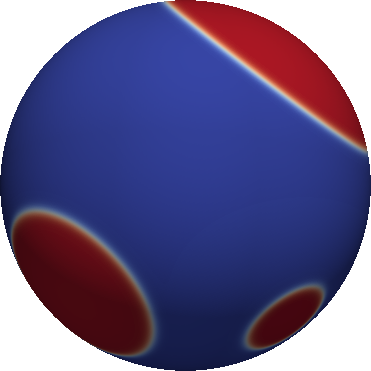}
    }
    \hfill
    \subfigure[$\varphi_h(\cdot,t \approx 2.0562)$]{
      \includegraphics*[width=0.16\textwidth]{./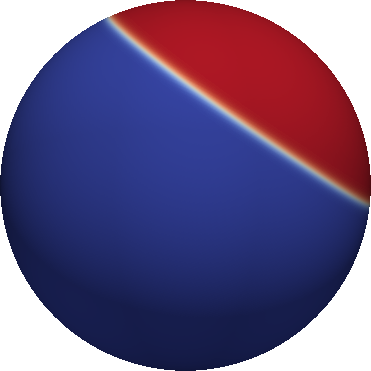}
    }
    \hfill
    \subfigure[]{
      \includegraphics*[height=0.09\textheight]{./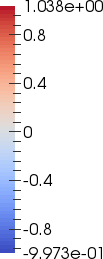}
    }
    \hfill
  }
  \caption{\label{fig:thermo}\footnotesize Numerical results for
    a choice of $q$ leading to an energy decreasing evolution and
    $\varphi(\cdot,0) = -0.5 + \mathcal R$. Contour plots of
    $\varphi(\cdot,t)$ for several choices of times $t$.}
\end{figure}

\subsection{Simulation of the full model}
\label{subsec:diffuse}

For the simulation of the coupled bulk--surface model
\eqref{eq:orig_mu}--\eqref{eq:q-1}, we propose a diffuse interface
approximation based on the diffuse approaches for the treatment of
PDE's on and inside closed
surfaces in \cite{RaVo06} and \cite{LiLoRaVo09}, respectively. For a related phase-field description of a coupled
bulk--surface reaction--diffusion system, we refer to
\cite{RaRo14}. Moreover, a further diffuse interface description of a
coupled bulk--surface system has been given in \cite{TeLiLoWaVo09}, and
in \cite{AbLaSt15} a diffuse interface approach of a linear coupled
elliptic PDE system has been analyzed with respect well-posedness of
the diffuse system and convergence towards its sharp interface
counterpart. 

We choose a domain $\Omega \subset \R^3$ containing $\overline{B}$ and
provide a phase-field approximation of
\eqref{eq:orig_mu}--\eqref{eq:v} by
\begin{alignat}{2}
  \label{eq:diffAppr1}
  \psi \pd_t u &= D \D \cdot (\psi \D u) -\eps_g^{-1}
  b(\psi) q(u,v)&\qquad \text{in } \Omega \times (0,T]&,\\
  \label{eq:diffAppr2}
  b(\psi) \pd_t \varphi &= \D \cdot
  (b(\psi) \D \mu) 
  &\qquad \text{in } \Omega \times (0,T]&,\\
  \label{eq:diffAppr3}
  b(\psi) \mu &=  -\eps \D \cdot
  (b(\psi) \D \varphi) + \eps^{-1}b(\psi)W'(\varphi) 
  - \delta^{-1}b(\psi)(2v-1-\varphi)
  &\qquad \text{in } \Omega \times (0,T]&,\\
  \label{eq:diffAppr4}
  b(\psi) \pd_t v &=  \frac{4}{\delta} \D \cdot
  (b(\psi) \D v)
  - \frac{2}{\delta} \D \cdot (b(\psi) \D \varphi)
  + b(\psi)q(u,v)
  &\qquad \text{in } \Omega \times (0,T]&,
\end{alignat}
where the phase-field function $\psi : \Omega \to \R$ describing the
geometry, that is $B$ and $\Gamma$, is given by 
\begin{equation}
  \label{eq:tanh}
  \psi(x) := \frac12\left(1-\tanh\left(
    \frac{3r(x)}{\eps_g}\right)\right)
\end{equation}
for a (small) real number $\eps_g > 0$ and a signed distance $r$ to
$\Gamma$ being negative in $B$. Note that $\psi$ is obtained by
smearing out the characteristic function $\chi_B$ of $B$ on a length
proportional to $\eps_g$. Moreover, $b = b(\psi) :=
36\psi^2(1-\psi)^2$ is small outside the diffuse interface
region. 

We use a semi-implicit adaptive FEM discretization (see \cite{RaRo14})
and choose $\Omega = (-2,2)^3$ with all solutions assumed
$\Omega$-periodic, $\eps_g = \frac18$, $D=100$. All otherwise
degenerate mobility functions appearing in second order terms in
\eqref{eq:diffAppr1}--\eqref{eq:diffAppr4} are regularized by addition
of a parameter $\delta_r = 10^{-5}$. The resulting linear system is
solved by a stabilized bi-conjugate gradient (BiCGStab) solver. The
numerical scheme is implemented in AMDiS \cite{VeVo07}. Moreover, we
apply the parameters in \eqref{eq:parametersBasic} and initial
conditions as in Section  \ref{subsec:simulation} and additionally
$u(\cdot,0) = 4.25$ corresponding to $M=\frac{20\pi}{3}$ in
\eqref{eq:parametersBasic}. In Figure \ref{fig:diffuse} we see plots
of $\varphi_h(\cdot,t)|_{\{\psi_h=\frac12\}}$ for various times $t$,
where $\psi_h$ is obtained by replacing $r$ by a discrete signed
distance $r_h$ in \eqref{eq:tanh}. The results are in good qualitative
agreement with the results in Section  \ref{subsec:simulation} (see
Figure \ref{fig:basic}) and hence further justify the reduced model
formally obtained in the limit $D \to \infty$.

\begin{figure}[here]
  \centerline{
    \hfill
    \subfigure[$\varphi_h(\cdot,t = 0)$]{
      \includegraphics*[width=0.16\textwidth]{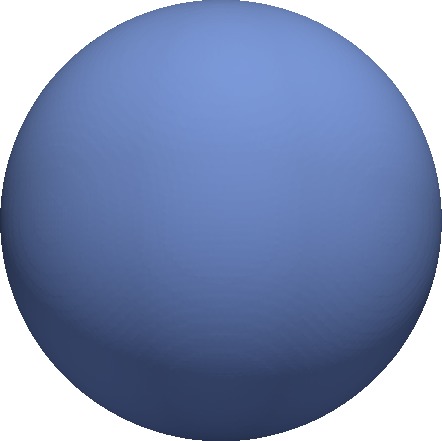}
    }
    \hfill
    \subfigure[$\varphi_h(\cdot,t \approx 0.0070)$]{
      \includegraphics*[width=0.16\textwidth]{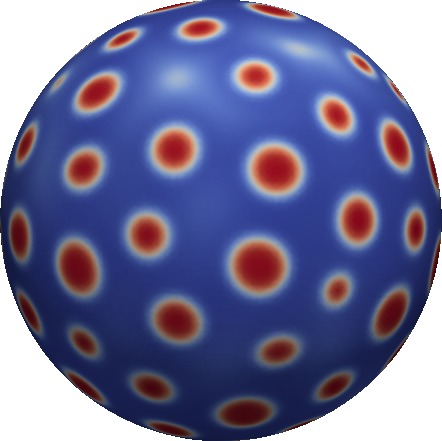}
    }
    \hfill
    \subfigure[$\varphi_h(\cdot,t \approx 0.0126)$]{
      \includegraphics*[width=0.16\textwidth]{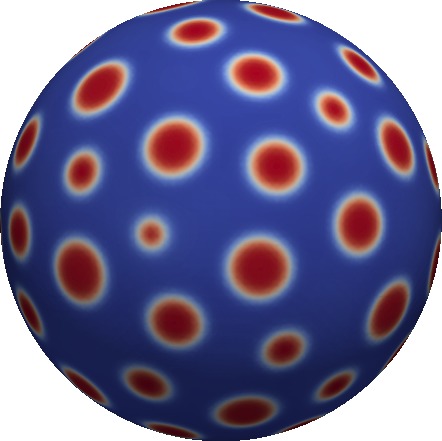}
    }
    \hfill
    \subfigure[$\varphi_h(\cdot,t \approx 0.1356)$]{
      \includegraphics*[width=0.16\textwidth]{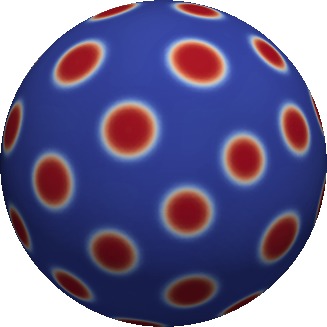}
    }
    \hfill
    \subfigure[$\varphi_h(\cdot,t \approx 2.2172)$]{
      \includegraphics*[width=0.16\textwidth]{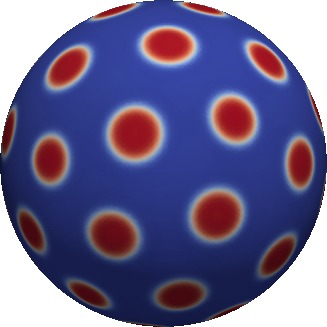}
    }
    \hfill
    \subfigure[]{
      \includegraphics*[height=0.09\textheight]{./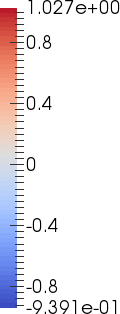}
    }
    \hfill
  }
  \caption{\label{fig:diffuse}\footnotesize Numerical results for
    diffuse interface approximation of the full model
    \eqref{eq:orig_mu}--\eqref{eq:q-1}. Contour plots of
    $\varphi(\cdot,t)|_{\{\psi_h=\frac12\}}$ for several choices of
    times $t$.}
\end{figure}

\section{Conclusions}

We have presented a model for lipid raft formation that extends in particular the model by G{\'o}mez, Sagu{\'e}s and Reigada \cite{GoSR08}. The key new aspect in our work is an explicit account for the cholesterol dynamics in the cytosol, which leads to a complex system of partial differential equations both in the bulk and on the cell membrane. These are coupled by an outflow condition for the cytosolic cholesterol and a source term in the surface membrane equation, determined by a constitutive relation. These considerations lead to an interesting and complex mathematical model. The surface equations combine a Cahn--Hilliard type evolution equation for an order parameter and an equation for the surface cholesterol. The latter is a diffusion equation including a cross-diffusion term and an additional (nonlocal) term. In the bulk we have a diffusion equation with a Robin-type boundary condition that couples both systems.

We have shown that our model can be derived from thermodynamical conservation laws and free energy inequalities for bulk and surface processes. Depending on the specific choice of the exchange term we obtain a total free energy decrease or not, which can be seen as a distinction between equilibrium (closed system) and non-equilibrium (open system) type models. The analysis of both classes reveals striking differences in the behavior: whereas equilibrium-type models only support the formation of macrodomains and connected phases, a prototypical choice of a non-equilibrium model leads to the formation of raft-like structures. 

These findings are the result of both a (formal) qualitative mathematical analysis and careful numerical simulations. More precisely, in a certain parameter regime (large interaction strength between lipid and cholesterol) we have demonstrated that stationary states of the prototypical non-equilibrium model are close to stationary states of the Otha--Kawasaki model, whereas in the case of equilibrium models stationary states coincide with stationary states for a surface Cahn--Hilliard equation. 

For a better understanding of the (complex) model asymptotic reductions are instrumental. We in particular justify the sharp interface limit of the model, that is the reduction when the parameter $\eps$ (related to the width of transition layers between the distinct lipid phases) tends to zero. Moreover, we have derived a simplified model by taking the cytosolic diffusion (typically much larger than the lateral membrane diffusion) to infinity. This reduction leads to a non-local model that only includes variables on the surface membrane.

Numerical simulation show that, depending on the parameter choices, uniformly distributed circular microdomains or stripe pattern that stretch out over the membrane emerge. A justification of the numerical scheme and implementation is presented and the dependence on the various parameters is analyzed. Further evidence for the connection with the Ohta--Kawasaki dynamics (with specific parameters that are in functional relation to the parameters of our model) is given.

Our results support the belief that non-equilibrium processes are indeed essential for microdomain formation. On the other hand, emerging structures are---in the long-time behavior---very regular and uniform, in contrast to the picture of a persisting redistribution of location and sizes of rafts observed in experiments. Thermal fluctuations and/or the interaction with proteins on the membrane have to be taken into account to obtain this more complex behavior. Our contribution presents a solid basis for such extensions.

\end{document}